\newtheorem{theorem}{Theorem}[section]
\newtheorem*{theorem*}{Theorem}
\newtheorem{lemma}[theorem]{Lemma}
\newtheorem{claim}[theorem]{Claim}
\newtheorem{proposition}[theorem]{Proposition}
\newtheorem{aomment}[theorem]{Comment}
\newtheorem*{comment*}{Comment}
\newtheorem*{definition*}{Definition}
\newtheorem*{remark*}{Remark}
\newtheorem*{observation*}{Observation}
\newtheorem*{assumption*}{Assumption}
\theoremstyle{definition}
\theoremstyle{remark}
\newcommand{\PP}{\mathbb{P}}
\newcommand{\Tr}{\mathrm{Tr}}
\newcommand{\Var}{\mathrm{Var}}
\newcommand{\Conf}{\mathrm{Conf}}
\begin{document}

\title[Gaussian limit for DPPs with $J$-Hermitian kernels]{Gaussian limit for determinantal point processes with $J$-Hermitian kernels}

\author
{Zhaofeng Lin}
\address{Zhaofeng Lin: Shanghai Center for Mathematical Sciences, Fudan University, Shanghai, 200438, China.}
\email{zflin18@fudan.edu.cn}

\author
{Yanqi Qiu}
\address
{Yanqi QIU: Institute of Mathematics and Hua Loo-Keng Key Laboratory of Mathematics, AMSS, Chinese Academy of Sciences, Beijing 100190, China.}
\email{yanqi.qiu@amss.ac.cn; yanqi.qiu@hotmail.com}

\author
{Kai Wang}
\address{Kai WANG: School of Mathematical Sciences, Fudan University, Shanghai, 200433, China.}
\email{kwang@fudan.edu.cn}

\thanks{Y. Qiu is supported by grants NSFC Y7116335K1, NSFC 11801547 and NSFC 11688101 of National Natural Science Foundation of China. K. Wang is supported by grants NSFC 11722102, NSFC 12026250 and LMNS, Fudan.}

\begin{abstract}
We show that the central limit theorem for linear statistics over determinantal point processes with $J$-Hermitian kernels holds under fairly general conditions. In particular, We establish Gaussian limit for linear statistics over determinantal point processes on union of two copies of $\mathbb{R}^d$ when the correlation kernels are $J$-Hermitian translation-invariant.
\end{abstract}

\subjclass[2010]{Primary 60G55; Secondary 30B20, 30H20.}

\keywords{determinantal point processes; $J$-Hermitian kernels; linear statistics; central limit theorem; translation-invariant kernels.}

\maketitle

\section{Introduction and main results}
\subsection{Determinantal point processes}
Let $X$ be a locally compact Polish space, $\mathcal{B}(X)$ be the Borel $\sigma$-algebra on $X$, and denote $\mathcal{B}_0(X)$ the collection of all pre-compact Borel sets. Let $\Conf(X)$ denote the space of all locally finite configurations over $X$, that is,
\begin{equation*}
	\begin{split}
        \Conf(X):=\big\{\xi=\textstyle\sum_{i}\delta_{x_i}\,\big|\,\,\forall i,\,x_i\in X\, \text{and $\xi(\Delta)<\infty$ for all $\Delta\in\mathcal{B}_0(X)$}\big\}.
	\end{split}
\end{equation*}
Consider the vague topology on $\Conf(X)$, the weakest topology on $\Conf(X)$ such that all maps $\Conf(X)\ni\xi\mapsto\int_{X}fd\xi$, $f\in C_c(X)$, are continuous. Here $C_c(X)$ is the space of all continuous real-valued functions on $X$ with compact support. The configuration space $\Conf(X)$ equipped with the vague topology becomes a Polish space. The Borel $\sigma$-algebra $\mathcal{F}$ on $\Conf(X)$ is generated by the cylinder sets $C_n^\Delta=\big\{\xi\in\Conf(X)\,|\,\,\xi(\Delta)=n\big\}$, where $\Delta\in\mathcal{B}_0(X)$ and $n\in\mathbb{N} = \{0,1,2,\cdots\}$. By definition,  a Borel probability measure $\mathbb{P}$ on $\Conf(X)$ is called a point process on $X$. For more details, see, e.g. \cite{DV,KK,Le}.

Let $\mu$ be a reference Radon measure on $X$. The $n$-th ($n\in\mathbb{N_+}$) correlation function with respect to $\mu$ of a point process $\mathbb{P}$  on $X$, if exists, is a $\mu^{\otimes n}$-a.e. non-negative measurable symmetric function $\rho_n:X^n\to [0, +\infty)$, such that for any family of mutually disjoint subsets $\Delta_1,\Delta_2,\cdots,\Delta_n\in\mathcal{B}_0(X)$, $m\geq 1, n_i\geq 1, n_1+n_2+\cdots+n_m = n$,
\begin{equation}\label{def-corre-func}
	\begin{split}
        \mathbb{E}_{\mathbb{P}}\Big[\prod_{i=1}^{n}\frac{(\#_{\Delta_i})!}{(\#_{\Delta_i}-n_i)!}\Big]=\int_{\Delta_1^{n_1}\times\cdots\times\Delta_m^{n_m}} \rho_n(x_1,\cdots,x_n)d\mu(x_1)\cdots d\mu(x_n),
	\end{split}
\end{equation}
where $\#_\Delta:\Conf(X)\to\mathbb{N}$ is defined by $\#_\Delta(\xi):=\xi(\Delta)$.

A configuration $\xi=\textstyle\sum_{i}\delta_{x_i}\in\Conf(X)$ is called simple if $\xi(\{x\}) \le 1$ for all $x\in X$. When $\xi$ is simple, we identify $\xi=\textstyle\sum_{i}\delta_{x_i}$ with the subset $\{x_i\}\subset X$. A point process $\mathbb{P}$  on $X$ is called simple if $\PP$-almost every $\xi\in\Conf(X)$ is simple.

Let $K:X\times X\to\mathbb{C}$ be a measurable function, by slightly abusing the notation, we also denote $K$ the associated integral operator with integral kernel $K(x,y)$, that is, $K:L^2(X,\mu)\to L^2(X,\mu)$ is defined by 
\begin{equation}\label{associ-opera}
	\begin{split}
		Kf(x)=\int_X K(x,y)f(y)d\mu (y)\,,\quad f\in L^2(X,\mu).
	\end{split}
\end{equation}
A simple point process is called determinantal with kernel $K$ with respect to $\mu$ if it admits correlation functions of all orders, and
\begin{equation}\label{def-dpp}
	\begin{split}
        \rho_n(x_1,\cdots,x_n)=\det\big[K(x_i,x_j)\big]_{1\leq i, j\leq n}
	\end{split}
\end{equation}
for every $n\geq 1$ and $\mu^{\otimes n}$-a.e. $(x_1,\cdots ,x_n)\in X^n$. We refer the reader to \cite{Bo,HKPV,PV,ST,So} for further background and details of determinantal point processes.

\subsection{DPPs with $J$-Hermitian kernels}
Suppose that the underlying space $X$ is split into two disjoint parts $X=X_1\sqcup  X_2$ with $X_1,X_2\in\mathcal{B}(X)$. Hence we have the direct sum decomposition $L^2(X,\mu)=L^2(X_1,\mu)\oplus L^2(X_2,\mu)$, and for $i=1,2$, we denote $P_i$ the orthogonal projection of $L^2(X,\mu)$ onto $L^2(X_i,\mu)$. Let $J:=P_1-P_2$, and then we define a $J$-scalar product on $L^2(X,\mu)$ by
\begin{equation*}
	\begin{split}
        [f,g]:=\left\langle Jf,g\right\rangle=\left\langle P_1f,P_1g\right\rangle-\left\langle P_2f,P_2g\right\rangle,\quad f,g\in L^2(X,\mu),
	\end{split}
\end{equation*}  
here $\left\langle\cdot,\cdot\right\rangle$ is the usual scalar product in $L^2(X,\mu)$. 

A bounded linear operator $K$ on $L^2(X,\mu)$ is called $J$-self-adjoint if $[Kf,g]=[f,Kg]$ for all $f,g\in L^2(X,\mu)$. A $J$-self-adjoint operator $K$ can be expressed in block form: 
\begin{equation*}
		K=
		\begin{bmatrix}
			K_{11}&K_{12}\\
			-K_{12}^*&K_{22}
		\end{bmatrix}
\end{equation*}
satisfying
\begin{equation}\label{J-Hermitian}
	\begin{split}
		\left\{\begin{array}{l}
			P_1K^*P_1=P_1KP_1\\
			P_1K^*P_2=-P_1KP_2\\
			P_2K^*P_1=-P_2KP_1\\
			P_2K^*P_2=P_2KP_2
		\end{array}\right..
	\end{split}
\end{equation}
A measurable function $K(x,y)$ is called $J$-Hermitian if it induces a bounded $J$-self-adjoint integral operator by \eqref{associ-opera}. Note that $K(x,y)$ is $J$-Hermitian iff for $\mu\otimes\mu$-a.e. $(x,y)\in X^2$,
\begin{equation}\label{J-Hermitian-kernel}
	\begin{split}
		\left\{\begin{array}{l}
			K(x,y)=\overline{K(y,x)}\,,\quad\,x,y\in X_1\,\,\text{or}\,\,x,y\in X_2\,,\\
			K(x,y)=-\overline{K(y,x)}\,,\quad\,x\in X_1$, $y\in X_2\,.
		\end{array}\right.
	\end{split}
\end{equation}
We denote an operator $\widehat K$ through $P_1$, $P_2$ and $K$ by
\begin{equation}\label{def-Khat}
        \widehat K:=KP_1+(1-K)P_2,
\end{equation}  
then the operator $\widehat K$ has the following block form:
\begin{equation*}
		\widehat K=
		\begin{bmatrix}
			K_{11}&K_{12}\\
			K_{12}^*&P_2-K_{22}
		\end{bmatrix}.
\end{equation*}  
It is easy to see that $K$ is $J$-self-adjoint if and only if $\widehat{K}$ is self-adjoint. We refer the reader to \cite{BOls,BOlsh,BOl,BOlsha,BOO,BQ,Ly} for more details and examples of $J$-Hermitian kernels. In particular, the matrix tail kernel derived by Borodin and Olshanski  in \cite[Theorem~VII]{BOl} is a $J$-Hermitian translation-invariant kernel on $\mathbb{R}\sqcup\mathbb{R}$.

We denote by $\mathscr{L}_{1|2}(L^2(X,\mu))$ the set of all bounded linear operators $A$ on $L^2(X,\mu)$ such that $P_1AP_1+P_2AP_2$ is trace-class and $P_1AP_2+P_2AP_1$ is Hilbert-Schmidt. For $A\in\mathscr{L}_{1|2}(L^2(X,\mu))$, we set 
\begin{equation}\label{def-Tr}
	\begin{split}
        \Tr(A):=\Tr(P_1AP_1+P_2AP_2).
	\end{split}
\end{equation} 
Note that  when $n\geq2$, for any $A_1,A_2,\cdots,A_n\in\mathscr{L}_{1|2}(L^2(X,\mu))$, since  $A_1,A_2,\cdots,A_n$ are Hilbert-Schmidt operators, $A_1A_2\cdots A_n$ is trace-class, hence we may define $\Tr(A_1A_2\cdots A_n)$ as  the usual trace of $A_1A_2\cdots A_n$.

Suppose that $K$ is a $J$-self-adjont, locally $\mathscr{L}_{1|2}(L^2(X,\mu))$, bounded linear operator on $L^2(X,\mu)$. Here $K$ is locally $\mathscr {L}_{1|2}(L^2(X,\mu))$ means that for each $\Delta\in\mathcal{B}_0(X)$, the operator $\chi_{\Delta}K\chi _{\Delta}$ belongs to $\mathscr{L}_{1|2}(L^2(X,\mu))$, where $\chi_{\Delta}$ denotes the multiplication operator by the indicator of $\Delta$. Then by \eqref{def-Tr}, $\Tr(\chi_{\Delta}K\chi_{\Delta})$ makes sense. We can choose an integral kernel $K(x,y)$ of $K$ such that 
\begin{equation*}
	\begin{split}
		\Tr(\chi_{\Delta}K\chi_{\Delta})=\int_{\Delta}K(x,x)d\mu(x)
	\end{split}
\end{equation*}
for any $\Delta\in\mathcal{B}_0(X)$, see \cite[Proposition~13]{Ly}.  Lytvynov \cite[Theorem 3]{Ly} proves that there is a determinantal point process on $X$ with $J$-Hermitian kernel $K$ with respect to the Radon measure $\mu$ if and only if 
\begin{equation}\label{0<Khat<1}
	\begin{split}
		0\leq \widehat K\leq 1.
	\end{split}
\end{equation}
Following Soshnikov \cite[Theorem~1]{Soshn}, in what follows,  we always assume that we can choose a $J$-Hermitian integral kernel $K(x,y)$ such that for any $\Delta\in\mathcal{B}_0(X)$, the operator $K\chi_{\Delta}$ belongs to $\mathscr {L}_{1|2}(L^2(X,\mu))$, then for any $n\in \mathbb{N}_+$ and $\Delta_1,\Delta_2\cdots,\Delta_n\in\mathcal{B}_0(X)$,
\begin{equation*}
	\begin{split}
		\Tr(&K\chi _{\Delta_1}K\chi _{\Delta_2}\cdots K\chi _{\Delta_n})\\
		&=\int_{\Delta_1\times\Delta_2\times\cdots\times\Delta_n}K(x_1,x_2)K(x_2,x_3)\cdots K(x_n,x_1)d\mu (x_1)\cdots d\mu (x_n).
	\end{split}
\end{equation*}
And then a simple function approximation yields that for any $n\in \mathbb{N}_+$ and $f_1,f_2,\cdots,f_n\in B_c(X)$,
\begin{equation}\label{trace-integral}
	\begin{split}
		\Tr(&Kf_1Kf_2\cdots Kf_n)\\
		&=\int_{X^n}f_1(x_1)K(x_1,x_2)f_2(x_2)K(x_2,x_3)\cdots f_n(x_n)K(x_n,x_1)d\mu (x_1)\cdots d\mu (x_n).
	\end{split}
\end{equation}

\subsection{Gaussian limit for DPPs with Hermitian kernels}

Let us briefly recall the work of Soshnikov on the Gaussian limit of DPPs with Hermitian kernels.

Let $\mathbb{P}$ be a determinantal point process on a locally compact Polish space $X$ with kernel $K$ with respect to Radon measure $\mu$. It is important to study the behavior of linear statistics
\begin{equation*}
	\begin{split}
		S_f(\xi):=\sum_{x\in\xi}f(x)\,,\quad\xi\in\Conf(X),
	\end{split}
\end{equation*}
for test functions in a scaling limit, a lot of results have been obtained in  \cite{Ba,BW,CL,DE,DS,Joh,Jo,Sosh,Soshn,Sos,Sp,Wi}. The mathematical expectation and variance of $S_f$ can be calculated by \eqref{def-corre-func} and \eqref{def-dpp},
\begin{equation}\label{expecsf}
	\begin{split}
		\mathbb{E}_\mathbb{P}S_f=\int_Xf(x)K(x,x)d\mu(x),
	\end{split}
\end{equation}
\begin{equation}\label{varsf}
	\begin{split}
		\Var_\mathbb{P}S_f=\int_Xf^2(x)K(x,x)d\mu(x)-\int_{X^2}f(x)f(y)K(x,y)K(y,x)d\mu(x)d\mu(y).
	\end{split}
\end{equation}

Suppose that integral operator $K$ is self-adjont, non-negative definite, locally trace-class and bounded on $L^2(X,\mu)$. By a theorem obtained by Macchi \cite{Ma} and Soshnikov \cite{So}, as well as Shirai and Takahashi \cite{ST}, the integral kernel $K(x,y)$ of the operator $K$ is the correlation kernel of a determinantal point process if and only if $0\leq K\leq1$. 

For $L\geq0$, let $\mathbb{P}_L$ be a determinantal point process on locally compact Polish space $X_L$ with Hermitian kernel $K_L$ with respect to Radon measure $\mu_L$. And suppose that for any $\Delta\in\mathcal{B}_0(X_L)$, the operator $K_L\chi_{\Delta}$ is trace-class. Let $f_L\in B_c(X_L)$, the space of bounded measurable real-valued functions on $X_L$ with compact support. The work of Soshnikov \cite[Theorem~1]{Soshn} gives that,  under some mild conditions, the centered normalized linear statistics 
\begin{equation*}
	\begin{split}
		\frac{S_{f_L}-\mathbb{E}_LS_{f_L}}{\sqrt{{\Var_L}S_{f_L}}}
	\end{split}
\end{equation*}        
converges in distribution to the standard normal law $N(0,1)$ as $L\to +\infty$. From now on, we always denote $\mathbb{E}_L$ and $\Var_L$ the mathematical expectation and variance with respect to $\mathbb{P}_L$.

\subsection{Main results}
For a family of determinantal point processes with $J$-Hermitian kernels, our main result gives the central limit theorem for linear statistics when the variance grows faster than some fixed sufficiently small power of the mathematical expectation.

For $L\geq 0$, suppose that $\mathbb{P}_L$ is a determinantal point process on locally compact Polish space $X_L=X_L^{(1)}\sqcup X_L^{(2)}$ with $J$-Hermitian kernel $K_L$ with respect to Radon measure $\mu _L$. And suppose that for any $\Delta\in\mathcal{B}_0(X_L)$, the operator $K_L\chi_{\Delta}$ belongs to $\mathscr {L}_{1|2}(L^2(X_L,\mu_L))$. For $f_L\in B_c(X_L)$, consider the linear statistics 
\[
S_{f_L}(\xi)=\sum_{x\in\xi}f_L(x), \, \xi\in\Conf(X_L).
\] 

\begin{theorem}\label{thm-1}
   Suppose that 
\begin{itemize}
\item $\Var_L(S_{f_L})\to +\infty$ as $L\to +\infty$;
\item $\sup_{x\in X_L}|f_L(x)|=o((\Var_L(S_{f_L}))^\varepsilon)$ as $L\to +\infty$ for any $\varepsilon>0$;
\item $\mathbb{E}_LS_{|f_L|}=O((\Var_L(S_{f_L}))^\delta)$ as $L\to +\infty$ for some $\delta>0$.
\end{itemize}
 Then the centered normalized linear statistics
    \begin{equation*}
    	\begin{split}
    		\frac{S_{f_L}-\mathbb{E}_LS_{f_L}}{\sqrt{{\Var_L}S_{f_L}}}
    	\end{split}
    \end{equation*} 
    converges in distribution to the standard normal law $N(0,1)$ as $L\to +\infty$.
\end{theorem}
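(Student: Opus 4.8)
The plan is to prove the statement by the method of cumulants, adapting Soshnikov's argument for Hermitian kernels to the $J$-Hermitian setting. Write $V_L:=\Var_L(S_{f_L})$ and let $C_m$ denote the $m$-th cumulant of $S_{f_L}$, so that $C_2=V_L$ by \eqref{varsf}. The centered normalized statistic $(S_{f_L}-\mathbb{E}_LS_{f_L})/\sqrt{V_L}$ has cumulants $0,1,C_3/V_L^{3/2},C_4/V_L^2,\dots$, and since the standard normal law is determined by its moments, it suffices to show that $C_m/V_L^{m/2}\to 0$ as $L\to+\infty$ for every fixed $m\ge 3$; convergence of all cumulants to those of $N(0,1)$ then yields convergence in distribution.

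First I would realize the cumulant generating function as a regularized Fredholm determinant. Writing $\phi=e^{tf_L}-1$, the correlation structure \eqref{def-dpp} gives $\mathbb{E}_L[e^{tS_{f_L}}]=\det(1+\phi K_L)$, and formally
\begin{equation*}
\log\mathbb{E}_L[e^{tS_{f_L}}]=\sum_{n\ge 1}\frac{(-1)^{n-1}}{n}\Tr\big((\phi K_L)^n\big),
\end{equation*}
where for $n=1$ the trace is the $J$-trace \eqref{def-Tr} and for $n\ge 2$ it is the ordinary trace, which is legitimate because products of operators in $\mathscr{L}_{1|2}(L^2(X_L,\mu_L))$ are trace-class. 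Expanding $\phi$ in powers of $t$ and collecting the coefficient of $t^m/m!$ expresses $C_m$ as a finite linear combination of traces $\Tr(f_L^{k_1}K_Lf_L^{k_2}K_L\cdots f_L^{k_n}K_L)$ with $k_1+\cdots+k_n=m$, each computed as an iterated integral against the kernel by \eqref{trace-integral}; the $m=2$ term reproduces \eqref{varsf}.

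The heart of the matter is to bound these cumulant traces, and here the $J$-Hermitian structure enters through the operator $\widehat K_L$ of \eqref{def-Khat}, which by \eqref{0<Khat<1} is self-adjoint with $0\le\widehat K_L\le 1$. Using the operator identity $K_L=\widehat K_L J+P_2$ together with the fact that $f_L$, $J$, $P_1$, $P_2$ are commuting multiplication operators, each trace $\Tr(f_L^{k_1}K_L\cdots f_L^{k_n}K_L)$ rewrites as a sum of traces of products of the positive contraction $\widehat K_L$ against multiplication operators. Applying H\"older's inequality for the trace, extracting one factor that carries the mass $\mathbb{E}_LS_{|f_L|}=\Tr(|f_L|K_L)$ and estimating the remaining factors in operator norm by powers of $\sup|f_L|$ via $0\le\widehat K_L\le 1$, yields a crude estimate of the form $|C_m|\le c(m)\,(\sup|f_L|)^{m-1}\,\mathbb{E}_LS_{|f_L|}$. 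Since $\sup|f_L|=o(V_L^\varepsilon)$ for every $\varepsilon>0$ and $\mathbb{E}_LS_{|f_L|}=O(V_L^\delta)$, this gives $C_m/V_L^{m/2}=o(V_L^{\,\delta-m/2+\varepsilon'})$ for arbitrarily small $\varepsilon'$, hence $C_m/V_L^{m/2}\to 0$ for all orders $m>2\delta$.

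The remaining obstacle, which I expect to be the technical core, is the finite set of low-order cumulants $3\le m\le 2\delta$, for which the crude mass bound is too weak. For these I would sharpen the estimate by exploiting the cancellation in the alternating sum defining $C_m$ together with the positivity $0\le\widehat K_L\le 1$: reorganizing the cumulant traces so that differences of the form $f_L(x_i)-f_L(x_j)$ appear, one aims for a bound of the type $|C_m|\le c(m)(\sup|f_L|)^{m-2}\,V_L$, in which the variance itself is the governing scale, exactly as it is for $m=2$. Granting such a bound, $C_m/V_L^{m/2}=o(V_L^{(m-2)\varepsilon+1-m/2})\to 0$ for every $m\ge 3$. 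Carrying out this sharpening in the $J$-Hermitian framework is where the main difficulty lies: unlike in the Hermitian case, $K_L$ is not itself a contraction, so the positivity and Schatten-norm inequalities underlying the estimate, including the non-negativity that governs the variance, must be re-derived through $\widehat K_L$ and the $J$-trace rather than through $K_L$ directly.
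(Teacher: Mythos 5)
There is a genuine gap, and it sits exactly where you flagged ``the technical core.'' Your strategy demands $C_m/V_L^{m/2}\to 0$ for \emph{every} $m\ge 3$, and for the finite range $3\le m\le 2\delta$ you only gesture at a sharpened bound $|C_m|\le c(m)(\sup|f_L|)^{m-2}V_L$ that you do not prove. No such bound is established here (nor in Soshnikov's Hermitian prototype), and the theorem does not need it: the paper invokes the standard lemma (Soshnikov's Lemma~3, a consequence of the Marcinkiewicz theorem; Lemma~\ref{lemma-1} in the text) that a sequence with $C_1=0$, $C_2=1$ and $C_n\to 0$ for all \emph{sufficiently large} $n$ converges to $N(0,1)$. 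With that lemma, the crude estimate $C_m=O(V_L^{\delta+\varepsilon})$ already finishes the proof for $m>\max\{2,2\delta\}$, and the low-order cumulants never have to be controlled individually. Without it, your argument is incomplete at its hardest step, and I see no reason the deferred sharpening is achievable under the stated hypotheses.

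The mechanism you propose for the ``crude'' bound is also shakier than it looks. Extracting one factor carrying the mass $\mathbb{E}_LS_{|f_L|}=\Tr(|f_L|K_L)$ via H\"older requires controlling a \emph{trace norm}, but $\Tr$ here is the $J$-trace of \eqref{def-Tr}: $f_LK_L$ need not be trace-class, it is not positive, and its trace norm is not dominated by $\mathbb{E}_LS_{|f_L|}$. Likewise the substitution $K_L=\widehat K_LJ+P_2$ produces terms involving $\int_{X_L^{(2)}\cap\,\supp f_L}|f_L|\,d\mu_L$, which none of the hypotheses control --- this is precisely why the particle--hole duality argument, which works in the discrete setting, does not transfer to the continuum. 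The paper's actual route is different: it splits each trace $\Tr(K_Lf^{n_1}K_Lf^{n_2})$ into the four blocks $P_L^{(i)}\cdot P_L^{(j)}$, uses \eqref{J-Hermitian} to show the diagonal-block term is $\ge 0$ and the off-diagonal one is $\le 0$, bounds both by first-order traces through the non-negativity of variances of auxiliary linear statistics, and then reduces $m\ge 3$ to $m=2$ by Cauchy--Schwarz together with $\|K_L\|\le 1$. You would need to adopt both that block decomposition and Lemma~\ref{lemma-1} to close the argument.
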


\begin{aomment}
	Theorem~\ref{thm-1} extends the work of Soshnikov for linear statistics over determinantal point processes with Hermitian kernels. Soshnikov's proof in \cite[Theorem~1]{Soshn} relies on the fact that when bounded linear self-adjoint operator $K$ is positive definite, terms like $\Tr(Kf_1Kf_2)$ are non-negative for non-negative functions $f_1$, $f_2$ with compact support, which is not the case for $J$-Hermitian kernels. This can not applied directly to $J$-Hermitian situation. 

On the other hand, when the underlying space $X$ is discrete, under the particle-hole duality transformation
\[
\Conf(X)\ni\xi\mapsto\widehat{\xi}:=(\xi\cap X_1)\cup(X_2\backslash\,\xi),
\] a $J$-Hermitian DPP turns into a Hermitian DPP and vise versa. By this particle-hole duality, it is easy to see that in the discrete setting,  Theorem~\ref{thm-1} follows from the work of Soshnikov, see \cite{BOO,BQ,Ly,Soshn}.
\end{aomment}

Next we consider $X=\mathbb{R}^d_1\sqcup\mathbb{R}_2^d:=\mathbb{R}^d\sqcup\mathbb{R}^d$, $d\mu=dx_1+dx_2$, where $dx_i$ is the Lebesgue measure on $\mathbb{R}_i^d$, $i=1,2$. Let $A$ be a $J$-self-adjoint integral operator on $L^2(X,\mu)$ with $J$-Hermitian translation-invariant integral kernel
\begin{equation}\label{J-HT-IIK}
	\begin{split}
        A(x_i,y_j)=
        \begin{bmatrix}
	        F(x_1-y_1)&G(x_1-y_2) \\ 
	        -\overline G(y_1-x_2)&H(x_2-y_2) 
        \end{bmatrix},
	\end{split}
\end{equation}
where $F,G,H\in L^2(\mathbb{R}^d)$. Denote $\mathscr{F}$ the Fourier transform on $L^2(\mathbb{R}^d)$, that is, for any $T\in L^2(\mathbb{R}^d)$,
	\begin{equation*}
	\begin{split}
		(\mathscr{F}T)(x)=\lim\limits_{R\to+\infty}\int_{|t|<R}T(t)e^{-2\pi it\cdot x}dt,
	\end{split}
\end{equation*}
where the limit is in $L^2(\mathbb{R}^d)$.
The following proposition gives a criterion to judge when does a $J$-Hermitian translation-invariant integral kernel can be a correlation kernel of a determinantal point process. 

\begin{proposition}\label{prop-2} 
	There exists a determinantal point process on $\mathbb{R}^d\sqcup \mathbb{R}^d$ with $J$-Hermitian translation-invariant kernel $A$ as in \eqref{J-HT-IIK} if and only if for almost every $x\in \mathbb{R}^d$,
	\begin{equation*}
		\begin{split}
		 \left\{\begin{array}{l}
		 	0\leq (\mathscr{F}F)(x)\leq 1\\
		 	0\leq (\mathscr{F}H)(x)\leq 1\\
		 	|(\mathscr{F}G)(x)|^2\leq (\mathscr{F}F)(x)[1-(\mathscr{F}H)(x)]\\
		 	|(\mathscr{F}G)(x)|^2\leq (\mathscr{F}H)(x)[1-(\mathscr{F}F)(x)]
		 \end{array}\right..
		\end{split}
	\end{equation*}
\end{proposition}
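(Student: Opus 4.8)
The plan is to reduce the existence question to the operator inequality in Lytvynov's criterion \eqref{0<Khat<1} and then to diagonalize the relevant operator by the Fourier transform. Since $A$ is $J$-self-adjoint, the associated operator $\widehat A := A P_1 + (1 - A) P_2$ from \eqref{def-Khat} is self-adjoint, and by \eqref{0<Khat<1} a determinantal point process with kernel $A$ exists if and only if $0 \le \widehat A \le 1$. First I would write $\widehat A$ in the block form
\[
\widehat A = \begin{bmatrix} A_{11} & A_{12} \\ A_{12}^* & P_2 - A_{22} \end{bmatrix},
\]
and observe from \eqref{J-HT-IIK} that every block is a translation-invariant (convolution) operator on $L^2(\R^d)$: $A_{11}$ is convolution by $F$, $A_{12}$ is convolution by $G$, the adjoint $A_{12}^*$ is convolution by $u \mapsto \overline{G(-u)}$, and $P_2 - A_{22}$ is the identity minus convolution by $H$.

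Next I would conjugate by the unitary $\mathscr U := \mathscr F \oplus \mathscr F$ on $L^2(X,\mu) = L^2(\R^d) \oplus L^2(\R^d)$. Because conjugation by $\mathscr F$ carries convolution by $T$ to multiplication by $\mathscr F T$ (and sends operator adjoints to complex conjugates of symbols), the operator $\mathscr U \widehat A \mathscr U^*$ becomes multiplication by the $2 \times 2$ matrix-valued symbol
\[
\widehat M(x) = \begin{bmatrix} (\mathscr F F)(x) & (\mathscr F G)(x) \\ \overline{(\mathscr F G)(x)} & 1 - (\mathscr F H)(x) \end{bmatrix}.
\]
A multiplication operator by a Hermitian matrix symbol is nonnegative (respectively bounded above by $1$) precisely when its symbol is nonnegative (respectively $\le I$) for almost every $x$; hence $0 \le \widehat A \le 1$ is equivalent to $0 \le \widehat M(x) \le I$ for a.e. $x \in \R^d$.

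Finally I would make the matrix condition explicit. Writing $\widehat M(x) = \left[\begin{smallmatrix} a & b \\ \bar b & c\end{smallmatrix}\right]$ and applying Sylvester's criterion to $\widehat M(x)$ and to $I - \widehat M(x)$, the bound $\widehat M(x) \ge 0$ is equivalent to $a \ge 0$, $c \ge 0$, $ac \ge |b|^2$, while $\widehat M(x) \le I$ is equivalent to $a \le 1$, $c \le 1$, $(1-a)(1-c) \ge |b|^2$. Substituting $a = (\mathscr F F)(x)$, $c = 1 - (\mathscr F H)(x)$ and $b = (\mathscr F G)(x)$ turns these six scalar inequalities into exactly the four inequalities in the statement. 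I expect the main point requiring care to be the correct identification of the matrix symbol, in particular the Fourier symbol $\overline{\mathscr F G}$ of the off-diagonal adjoint block $A_{12}^*$, together with the justification that the operator inequality $0 \le \widehat A \le 1$ passes to the pointwise inequality $0 \le \widehat M(x) \le I$; both are routine once the simultaneous Fourier diagonalization across the two copies of $\R^d$ is set up carefully.
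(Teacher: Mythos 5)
Your proposal is correct and follows essentially the same route as the paper: reduce to Lytvynov's criterion $0\leq\widehat A\leq 1$, conjugate by $\mathscr F\oplus\mathscr F$ to obtain the multiplication operator with matrix symbol $\begin{bmatrix}\mathscr F F&\mathscr F G\\\overline{\mathscr F G}&1-\mathscr F H\end{bmatrix}$, pass to the pointwise matrix inequality, and apply the $2\times 2$ positivity criterion. The only step you label as routine that the paper spells out in detail is the passage from the quadratic-form inequality for all $f_1,f_2\in L^2(\mathbb{R}^d)$ to the a.e.\ pointwise non-negative definiteness of the symbol, which the paper handles via Lebesgue points and a countable dense set of scalars; this is indeed standard and your outline is complete.
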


For $L\geq0$, let $\mathbb{P}_L$ be a determinantal point process on $X=\mathbb{R}_1^d\sqcup\mathbb{R}_2^d=\mathbb{R}^d\sqcup\mathbb{R}^d$ with $J$-Hermitian translation-invariant kernel 
\begin{equation}\label{kernelAL}
	\begin{split}
		A_L(x_i,y_j)=
		\begin{bmatrix}
			F_L(x_1-y_1)&G_L(x_1-y_2) \\ 
			-\overline{G_L}(y_1-x_2)&H_L(x_2-y_2) 
		\end{bmatrix},
	\end{split}
\end{equation}
where $F_L,G_L,H_L\in L^2(\mathbb{R}^d)$. It follows from Proposition~\ref{prop-2} that for almost every $x\in \mathbb{R}^d$,
\begin{equation*}
	\begin{split}
		|(\mathscr{F}G_L)(x)|^2&\leq \min{\big\{(\mathscr{F}F_L)(x)[1-(\mathscr{F}H_L)(x)],\,(\mathscr{F}H_L)(x)[1-(\mathscr{F}F_L)(x)]\big\}}\\
		&=\frac{1}{2}(\mathscr{F}F_L)(x)+\frac{1}{2}(\mathscr{F}H_L)(x)-(\mathscr{F}F_L)(x)(\mathscr{F}H_L)(x)\\
		&\quad-\frac{1}{2}|(\mathscr{F}F_L)(x)-(\mathscr{F}H_L)(x)|,
	\end{split}
\end{equation*}
and
\begin{equation*}
	\begin{split}
		0\leq |(\mathscr{F}F_L)(x)-(\mathscr{F}H_L)(x)|\leq 1,
	\end{split}
\end{equation*}
hence
\begin{equation*}
	\begin{split}
		\sigma_L^2:&=F_L(0)+H_L(0)-\int_{\mathbb{R}^d}|F_L(x)|^2dx-\int_{\mathbb{R}^d}|H_L(x)|^2dx-2\int_{\mathbb{R}^d}|G_L(x)|^2dx\\
		&=\int_{\mathbb{R}^d}\big[(\mathscr{F}F_L)(x)+(\mathscr{F}H_L)(x)-(\mathscr{F}F_L)(x)^2-(\mathscr{F}H_L)(x)^2-2|(\mathscr{F}G_L)(x)|^2\big]dx\\
		&\geq\int_{\mathbb{R}^d}\big[|(\mathscr{F}F_L)(x)-(\mathscr{F}H_L)(x)|-|(\mathscr{F}F_L)(x)-(\mathscr{F}H_L)(x)|^2\big]dx\\
		&\geq0.
	\end{split}
\end{equation*}
For a real-valued function $f\in L^1(\mathbb{R}^d)\cap L^2(\mathbb{R}^d)$, the linear statistics
\begin{equation*}
	\begin{split}
		S_{(f,-f)_L}(\xi):=\sum\limits_{x\in\xi\cap \mathbb{R}_1^d}f\big (\frac{x}{L}\big )-\sum\limits_{y\in\xi\cap \mathbb{R}_2^d}f\big (\frac{y}{L}\big )\,,\,\,\,\,\xi\in\Conf(\mathbb{R}_1^d\sqcup\mathbb{R}_2^d)=\Conf(\mathbb{R}^d\sqcup\mathbb{R}^d),
	\end{split}
\end{equation*}
as a random variable makes sense. In fact, consider the linear statistics
\begin{equation*}
	\begin{split}
		S_{(|f|,|f|)_L}(\xi)=\sum\limits_{x\in\xi\cap \mathbb{R}_1^d}\big|f\big (\frac{x}{L}\big )\big|+\sum\limits_{y\in\xi\cap \mathbb{R}_2^d}\big|f\big (\frac{y}{L}\big )\big|\,,\,\,\,\,\xi\in\Conf(\mathbb{R}_1^d\sqcup\mathbb{R}_2^d)=\Conf(\mathbb{R}^d\sqcup\mathbb{R}^d),
	\end{split}
\end{equation*}
we can show $\mathbb{E}_LS_{(|f|,|f|)_L}<+\infty$.

\begin{theorem}\label{thm-3}
	If there exist constants $\sigma >0$ and $C$ such that $\sigma_L\to \sigma$ as $L\to +\infty$, $F_L(0)+H_L(0)\leq C$ for all $L\geq 0$. And suppose that there exist $\kappa_L\to +\infty$ as $L\to +\infty$ such that 
	\begin{equation*}
		\begin{split}
			\int_{|x|>\frac{L}{\kappa_L}}\big[|F_L(x)|^2+|H_L(x)|^2+2|G_L(x)|^2\big]dx\to 0\,\,\,\text{as}\,\,\,L\to +\infty.
		\end{split}
    \end{equation*}
	Then for any real-valued function $f\in L^1(\mathbb{R}^d)\cap L^2(\mathbb{R}^d)$, the centered normalized linear statistics
	\begin{equation*}
		\begin{split}
			\frac{1}{\sigma L^{d/2}}\Big\{\big[\sum\limits_{x\in\xi\cap \mathbb{R}_1^d}f\big (\frac{x}{L}\big )-\sum\limits_{y\in\xi\cap \mathbb{R}_2^d}f\big (\frac{y}{L}\big )\big]-\big[F_L(0)-H_L(0)\big]L^d\int_{\mathbb{R}^d}f(x)dx\Big\}
		\end{split}
    \end{equation*}
	converges in distribution to the Gaussian random variable $N(0,\int_{\mathbb{R}^d}f^2(x)dx)$ as $L\to +\infty$.
\end{theorem}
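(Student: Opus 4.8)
The plan is to deduce Theorem~\ref{thm-3} from the general central limit theorem, Theorem~\ref{thm-1}, applied to the test function $f_L$ on $X_L=\mathbb{R}^d_1\sqcup\mathbb{R}^d_2$ defined by $f_L(x)=f(x/L)$ for $x\in\mathbb{R}^d_1$ and $f_L(y)=-f(y/L)$ for $y\in\mathbb{R}^d_2$, so that $S_{f_L}=S_{(f,-f)_L}$. Using \eqref{expecsf} together with the fact that translation invariance forces $A_L(x,x)=F_L(0)$ on $\mathbb{R}^d_1$ and $A_L(y,y)=H_L(0)$ on $\mathbb{R}^d_2$, the change of variables $u=x/L$ gives $\mathbb{E}_LS_{f_L}=[F_L(0)-H_L(0)]L^d\int_{\mathbb{R}^d}f(u)\,du$, which is exactly the centering in the statement. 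I will first treat the case where $f$ is bounded with compact support, so that genuinely $f_L\in B_c(X_L)$, and then remove this restriction by an approximation argument.

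The key computation is the variance asymptotics. Starting from \eqref{varsf} and decomposing $X^2$ into the four blocks according to the membership of $x,y$ in $\mathbb{R}^d_1,\mathbb{R}^d_2$, the $J$-Hermitian relations \eqref{J-Hermitian-kernel} turn each product $A_L(x,y)A_L(y,x)$ into $|F_L|^2$, $|H_L|^2$ or $-|G_L|^2$, while the sign changes of $f_L$ across $\mathbb{R}^d_2$ cancel precisely the minus signs coming from the off-diagonal blocks. Writing $\Phi_L:=|F_L|^2+|H_L|^2+2|G_L|^2$ and $m_L:=\int_{\mathbb{R}^d}\Phi_L$, the change of variables $x=Lu$, $w=x-y$ and the identity $F_L(0)+H_L(0)=\sigma_L^2+m_L$ yield
\begin{equation*}
\frac{\Var_L S_{f_L}}{L^d}=\sigma_L^2\int_{\mathbb{R}^d}f^2+\int_{\mathbb{R}^d}\Phi_L(w)\Big(\int_{\mathbb{R}^d}f(u)\big[f(u)-f(u-w/L)\big]\,du\Big)\,dw .
\end{equation*}
To show the last term tends to $0$, I would split the $w$-integral at $|w|=L/\kappa_L$: on the tail $|w|>L/\kappa_L$ the inner integral is bounded by $2\|f\|_{L^2}^2$ and the hypothesis on $\int_{|w|>L/\kappa_L}\Phi_L$ forces the contribution to vanish, while on the bulk $|w|\le L/\kappa_L$ one has $|w/L|\le 1/\kappa_L\to0$, so continuity of translation in $L^2(\mathbb{R}^d)$ makes the inner integral uniformly small and $m_L\le F_L(0)+H_L(0)\le C$ bounds the remaining mass. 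Hence $\Var_L S_{f_L}/(\sigma^2 L^d)\to\int_{\mathbb{R}^d}f^2$; in particular the first hypothesis of Theorem~\ref{thm-1} holds (using $\sigma>0$ and $f\not\equiv0$).

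For bounded compactly supported $f$ the remaining two hypotheses are immediate: $\sup_{X_L}|f_L|=\|f\|_\infty$ is a fixed constant, hence $o((\Var_L S_{f_L})^\varepsilon)$ for every $\varepsilon>0$, and $\mathbb{E}_LS_{|f_L|}=[F_L(0)+H_L(0)]L^d\int_{\mathbb{R}^d}|f|=O(L^d)=O(\Var_L S_{f_L})$, so the third hypothesis holds with $\delta=1$. Theorem~\ref{thm-1} then gives $(S_{f_L}-\mathbb{E}_LS_{f_L})/\sqrt{\Var_L S_{f_L}}\Rightarrow N(0,1)$, and since $\Var_L S_{f_L}/(\sigma^2L^d)\to\int f^2$, Slutsky's theorem converts the normalization $\sqrt{\Var_L S_{f_L}}$ into $\sigma L^{d/2}$ and the limit into $N(0,\int_{\mathbb{R}^d}f^2)$, which is the asserted statement for such $f$.

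Finally, for general $f\in L^1(\mathbb{R}^d)\cap L^2(\mathbb{R}^d)$ I would choose $f_n\in C_c(\mathbb{R}^d)$ with $f_n\to f$ in $L^2$, and set $W_L:=(S_{(f,-f)_L}-\mathbb{E}_LS_{(f,-f)_L})/(\sigma L^{d/2})$ and $W_L^{(n)}$ analogously with $f_n$. The same block computation, now bounding the double integral by Cauchy--Schwarz ($\int|g(u)g(u-w/L)|\,du\le\|g\|_{L^2}^2$), gives the \emph{uniform} estimate $\Var_L S_{(g,-g)_L}\le C'L^d\|g\|_{L^2}^2$ for $g=f-f_n$, whence $\sup_L\mathbb{E}_L|W_L-W_L^{(n)}|^2\le (C'/\sigma^2)\|f-f_n\|_{L^2}^2\to0$ as $n\to\infty$. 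Since each $W_L^{(n)}\Rightarrow N(0,\int f_n^2)$ by the previous step and $\int f_n^2\to\int f^2$, the standard approximation theorem for weak convergence yields $W_L\Rightarrow N(0,\int_{\mathbb{R}^d}f^2)$. I expect the variance asymptotics to be the main obstacle: one must justify replacing $f(u-w/L)$ by $f(u)$ uniformly over the entire range of $w$, and it is exactly here that the decay hypothesis on $\int_{|w|>L/\kappa_L}\Phi_L$ (controlling the tail) and the $L^2$-continuity of translation (controlling the bulk) are both indispensable; the bookkeeping of the signs produced by the $J$-structure is the other point demanding care.
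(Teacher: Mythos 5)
Your proposal is correct, and its overall architecture --- reduce to Theorem~\ref{thm-1} via the test function $(f,-f)_L$, compute the expectation and the variance asymptotics $\Var_LS_{(f,-f)_L}=\sigma^2L^d\int f^2+o(L^d)$, apply Theorem~\ref{thm-1} with $\delta=1$ for bounded compactly supported $f$, then pass to general $f\in L^1\cap L^2$ by $L^2$-approximation --- coincides with the paper's. The one place where you genuinely diverge is the technical heart, namely the proof that the remainder in the variance vanishes (the paper's Claim~\ref{claim-5}). The paper passes to the Fourier side via Plancherel, writes the remainder as $\int|(\mathscr{F}f)(x)|^2[(\mathscr{F}W_L)(0)-(\mathscr{F}W_L)(x/L)]\,dx$ with $W_L=|F_L|^2+|H_L|^2+2|G_L|^2$, and performs two splittings: the $x$-integral at $|x|=\sqrt{\kappa_L}$ and, inside, the $t$-integral defining $\mathscr{F}W_L$ at $|t|=L/\kappa_L$, using the elementary bound $|1-e^{-2\pi it\cdot x/L}|\le 2\pi|t||x|/L$. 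You stay in physical space, write the remainder as $\int W_L(w)\int f(u)[f(u)-f(u-w/L)]\,du\,dw$, split only once at $|w|=L/\kappa_L$, and invoke $L^2$-continuity of translation on the bulk together with Cauchy--Schwarz and the mass bound $\int W_L\le F_L(0)+H_L(0)\le C$. Your version is slightly more economical (one splitting instead of two, no Fourier transform of $W_L$ needed) at the cost of appealing to continuity of translation rather than a fully explicit inequality; both uses of the hypotheses ($\kappa_L\to\infty$ for the bulk, the tail-decay assumption for $|w|>L/\kappa_L$) are identical in substance. Likewise, in the approximation step you derive a \emph{uniform} bound $\Var_LS_{(g,-g)_L}\le C'L^d\|g\|_{L^2}^2$ directly from Cauchy--Schwarz, whereas the paper reuses the asymptotic variance formula for $g=f-f_n$; your route avoids rerunning the remainder estimate for each $f-f_n$ and is marginally cleaner, though logically equivalent.
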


\section{Proof of main results}
\subsection{Proof of Theorem~\ref{thm-1}}
We will use the method of moments to prove Theorem~\ref{thm-1}. Before proof, we remind the reader that for a real-valued random variable $\eta$ with all finite moments, the cumulants $C_n(\eta)$, $n\in\mathbb{N_+}$, are defined through the expansion of the second characteristic function:
\begin{equation}\label{def-cumu}
	\begin{split}
		\log\mathbb{E}(e^{it\eta})=\sum_{n=1}^{N}\frac{C_n(\eta)}{n!}(it)^n+o(|t|^N)\,\,\,\text{as}\,\,\,t\to 0.
	\end{split}
\end{equation}

We will use the following standard lemma, see, e.g. \cite[Theorem~3.3.26]{Du}, \cite[Corollary to Theorem~7.3.3]{Lu} and \cite[Lemma~3]{Soshn}.
\begin{lemma}\label{lemma-1}
	Let $\{\eta_L\}_{L\geq 0}$ be a family of real-valued random variables such that $C_1(\eta_L)=0$, $C_2(\eta_L)=1$ for all $L\geq 0$, and $C_n(\eta_L)\to 0$ as $L\to +\infty$ when $n$ is sufficiently large. Then $\eta_L$ converges in distribution to the standard normal law $N(0,1)$ as $L\to +\infty$.
\end{lemma}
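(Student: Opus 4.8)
The plan is to prove the lemma by the method of moments: I will convert the hypotheses on the cumulants defined in \eqref{def-cumu} into the statement that every moment of $\eta_L$ converges to the corresponding moment of $N(0,1)$, and then invoke the classical fact that the Gaussian law is uniquely determined by its moments. This is the route underlying all three cited references, and it sidesteps any attempt to manipulate the characteristic functions directly.

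First I would recall the combinatorial moment--cumulant relation. For a random variable $\eta$ with all moments finite, the $n$-th moment $m_n(\eta)$ is expressed through the cumulants by the sum over set partitions
\[
m_n(\eta)=\sum_{\pi\in\mathcal{P}(n)}\prod_{B\in\pi}C_{|B|}(\eta),
\]
where $\mathcal{P}(n)$ denotes the partitions of $\{1,\dots,n\}$ and $|B|$ is the size of a block $B$; this is a finite identity with nonnegative integer coefficients involving only $C_1,\dots,C_n$. Applying it to $\eta_L$ and letting $L\to+\infty$, I would observe that every block of size $1$ contributes the factor $C_1(\eta_L)=0$, while every block of size $\geq 3$ contributes a factor $C_{|B|}(\eta_L)\to 0$. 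Hence in the limit only those partitions all of whose blocks have size exactly $2$ survive, each contributing $\prod_B C_2(\eta_L)=1$. Thus $m_n(\eta_L)$ converges to the number of perfect matchings of $\{1,\dots,n\}$, namely $(n-1)!!$ for $n$ even and $0$ for $n$ odd --- precisely the moments of $N(0,1)$.

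Finally I would pass from moment convergence to convergence in distribution. The standard normal law is moment-determinate: its even moments $(2k-1)!!$ satisfy Carleman's condition $\sum_{k} m_{2k}^{-1/(2k)}=+\infty$, so no other probability measure shares all of them. The Fréchet--Shohat moment-convergence theorem (the form quoted as \cite[Theorem~3.3.26]{Du}) then yields $\eta_L\Rightarrow N(0,1)$, which is the assertion of the lemma.

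The hard part will not be the combinatorics but the passage from cumulants to the conclusion: one must be certain that the hypothesis genuinely forces \emph{every} moment to converge, i.e. that all cumulants of order $\geq 3$ --- not merely those of large order --- tend to $0$. Indeed, since the second characteristic function equals $\exp\!\big(-t^2/2+\sum_{n\geq 3}\frac{C_n(\eta_L)}{n!}(it)^n\big)$, any nonvanishing low-order cumulant would persist in the limiting characteristic function and destroy Gaussianity; so the effective content of the hypothesis is the vanishing of $C_n(\eta_L)$ for each $n\geq 3$. Granting this, the moment argument is routine, and the only genuinely analytic ingredient is the moment-determinacy of the Gaussian. As an alternative one can work directly with the characteristic function via the same cumulant expansion together with a tail estimate on the series $\sum_{n\geq 3}\frac{C_n(\eta_L)}{n!}(it)^n$; the obstacle there is to control this series uniformly in $L$, which once more reduces to the vanishing of the low-order cumulants.
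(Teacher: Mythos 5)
Your moment--cumulant combinatorics and the concluding appeal to Carleman's condition and the Fr\'echet--Shohat theorem are correct as far as they go, but they prove a weaker statement than Lemma~\ref{lemma-1}: your argument needs $C_n(\eta_L)\to 0$ for \emph{every} $n\geq 3$, whereas the hypothesis only provides this for $n$ beyond some unspecified threshold $n_0$, and says nothing about $C_3(\eta_L),\dots,C_{n_0-1}(\eta_L)$ (a priori these need not even stay bounded). You correctly flag this as ``the hard part,'' but the sentence ``the effective content of the hypothesis is the vanishing of $C_n(\eta_L)$ for each $n\geq 3$'' is not an argument: it assumes exactly what has to be shown. The gap is not cosmetic for this paper, because in the proof of Theorem~\ref{thm-1} the estimate \eqref{estimate-cumu} only yields $C_n(\eta_L)\to 0$ for $n>\max\{2,2\delta\}$; when $\delta>1/2$ the low-order cumulants are never directly controlled, so the ``sufficiently large $n$'' form of the lemma is precisely the form being used.

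The missing ingredient is the one signalled by the paper's citation of \cite[Corollary to Theorem~7.3.3]{Lu}, namely Marcinkiewicz's theorem that $e^{P(t)}$ with $P$ a polynomial of degree greater than $2$ is never a characteristic function; the paper's intended proof (that of \cite[Lemma~3]{Soshn}) combines this with your method of moments. In outline: pass to a subsequence along which each $C_j(\eta_L)$, $3\le j\le n_0-1$, converges in $[-\infty,+\infty]$. If all limits $c_j$ are finite, your partition formula shows every moment converges to the moment sequence generated by the cumulant sequence $(0,1,c_3,\dots,c_{n_0-1},0,0,\dots)$; this limit sequence is positive semidefinite and satisfies Carleman's condition (only finitely many cumulants enter, so $m_{2k}$ grows at most like $(2k)!\,M^{k}$), hence is realized by a law whose characteristic function is $\exp\bigl(-t^2/2+\sum_{j=3}^{n_0-1}c_j(it)^j/j!\bigr)$ near the origin, and Marcinkiewicz forces $c_3=\dots=c_{n_0-1}=0$, after which your argument applies. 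If instead some intermediate cumulant blows up, set $M_L=\max_{3\le j\le n_0-1}|C_j(\eta_L)|^{1/j}\to+\infty$ and run the same reasoning on $\eta_L/M_L$: its limiting cumulant sequence has vanishing first and second entries but a nonzero entry of some order $j\geq 3$, contradicting the fact that a law with zero mean and zero variance is $\delta_0$. Without this step (or an explicit quotation of Soshnikov's lemma, which is this exact statement), your proposal does not establish Lemma~\ref{lemma-1} as stated.
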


The following lemma is established by Soshnikov in \cite[formulas (2.6) and (2.7)]{Sos}.
\begin{lemma}\label{lemma-2}
	Let $\mathbb{P}$ be a determinantal point process in a locally compact Polish space $X$ with kernel $K$ with respect to Radon measure $\mu$. For $f\in B_c(X)$, the $n$-th $(n\in\mathbb{N_+})$ cumulant of linear statistics $S_f$ can be expressed by
	\begin{equation*}
		\begin{split}
			C_n(S_f)=&\sum_{m=1}^{n}\sum_{\substack{(n_1,\cdots,n_m)\in\mathbb{N}_+^m\\n_1+\cdots+n_m=n}}\frac{(-1)^{m+1}}{m}\frac{n!}{n_1!\cdots n_m!}\\
			&\times\int_{X^m}f^{n_1}(x_1)K(x_1,x_2)f^{n_2}(x_2)K(x_2,x_3)\cdots f^{n_m}(x_m)K(x_m,x_1)d\mu (x_1)\cdots d\mu (x_m).
		\end{split}
	\end{equation*}
\end{lemma}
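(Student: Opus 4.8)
The plan is to read off all the cumulants at once from the logarithm of the characteristic function of $S_f$, which for a determinantal point process is a Fredholm determinant. Fix $f\in B_c(X)$ and write $\phi_t:=e^{itf}-1$, a bounded measurable function supported on the compact set $\Delta:=\supp f$. Since $\prod_{x\in\xi}e^{itf(x)}$ only involves the finitely many points of $\xi$ lying in $\Delta$, expanding the product over the configuration and inserting the determinantal correlation functions \eqref{def-dpp} together with \eqref{def-corre-func} yields the generating functional identity
\begin{equation*}
\mathbb{E}\big[e^{itS_f}\big]=\mathbb{E}\Big[\prod_{x\in\xi}\big(1+\phi_t(x)\big)\Big]=\sum_{n=0}^{\infty}\frac{1}{n!}\int_{X^n}\prod_{i=1}^{n}\phi_t(x_i)\,\det\big[K(x_i,x_j)\big]_{1\le i,j\le n}\,d\mu^{\otimes n}=\det\big(I+\phi_t K\big),
\end{equation*}
where $\phi_t K=\phi_t\chi_\Delta K$ is the operator obtained by composing multiplication by $\phi_t$ with $K$. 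Because $K$ is locally trace-class, this operator is trace-class, so the right-hand side is a bona fide Fredholm determinant.

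The key analytic fact is that $\|\phi_t\|_\infty\to0$ as $t\to0$, whence $\|\phi_t K\|_1\le\|\phi_t\|_\infty\,\|\chi_\Delta K\|_1\to0$. Thus for $|t|$ small enough $\det(I+\phi_t K)$ is nonvanishing and analytic in $t$, and
\begin{equation*}
\log\mathbb{E}\big[e^{itS_f}\big]=\Tr\log(I+\phi_t K)=\sum_{m=1}^{\infty}\frac{(-1)^{m+1}}{m}\Tr\big((\phi_t K)^m\big),
\end{equation*}
the series converging absolutely. Next I would apply the trace-integral identity \eqref{trace-integral} with all $f_i=\phi_t$ to turn each $\Tr((\phi_t K)^m)$ into an $m$-fold integral, and then Taylor-expand each factor $\phi_t(x_j)=\sum_{n_j\ge1}\tfrac{(it)^{n_j}}{n_j!}f^{n_j}(x_j)$ and substitute, producing
\begin{equation*}
\Tr\big((\phi_t K)^m\big)=\sum_{n_1,\dots,n_m\ge1}\frac{(it)^{n_1+\cdots+n_m}}{n_1!\cdots n_m!}\int_{X^m}f^{n_1}(x_1)K(x_1,x_2)\cdots f^{n_m}(x_m)K(x_m,x_1)\,d\mu^{\otimes m}.
\end{equation*}

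Finally I would collect the resulting triple summation according to the total degree $n=n_1+\cdots+n_m$ in $it$. Comparing the coefficient of $(it)^n$ with the defining expansion \eqref{def-cumu} of the second characteristic function identifies that coefficient with $C_n(S_f)/n!$, and multiplying through by $n!$ gives precisely the stated formula (the sum over $m$ being finite, $1\le m\le n$, since each $n_i\ge1$). The one genuine technical point — which I expect to be the main obstacle — is the uniform control of the relevant trace and Hilbert–Schmidt norms for small $t$: estimating $\big|\int_{X^m}f^{n_1}K\cdots f^{n_m}K\,d\mu^{\otimes m}\big|\le\|f\|_\infty^{\,n_1+\cdots+n_m}\,C^m$ via $\|f^{n_j}\chi_\Delta K\|_{\mathrm{HS}}\le\|f\|_\infty^{n_j}\|\chi_\Delta K\|_{\mathrm{HS}}$ produces the convergent majorant $\sum_{m\ge1}\tfrac{C^m}{m}\big(e^{\,|t|\,\|f\|_\infty}-1\big)^m$, valid once $C\big(e^{\,|t|\,\|f\|_\infty}-1\big)<1$. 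This single estimate simultaneously justifies the logarithmic expansion, the term-by-term use of \eqref{trace-integral}, and the rearrangement of the series into the cumulant grouping; everything else is bookkeeping.
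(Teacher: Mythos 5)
Your derivation is correct and is essentially the argument the paper relies on: the paper offers no proof of Lemma~\ref{lemma-2}, citing instead formulas (2.6)--(2.7) of Soshnikov \cite{Sos}, and those are obtained there exactly as you propose --- writing $\mathbb{E}e^{itS_f}$ as a Fredholm determinant, expanding $\log\det(I+\phi_tK)=\sum_m\frac{(-1)^{m+1}}{m}\Tr((\phi_tK)^m)$, Taylor-expanding $\phi_t=e^{itf}-1$, and matching coefficients of $(it)^n$ against \eqref{def-cumu}. One caveat specific to the setting in which the lemma is actually applied here: $K$ is only locally $\mathscr{L}_{1|2}$, not locally trace-class, so $\|\chi_\Delta K\|_1$ may be infinite and both the ordinary Fredholm determinant and the first trace $\Tr(\phi_tK)$ are undefined in the classical sense. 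Only the $m=1$ term is affected --- for $m\ge2$ the operator $(\phi_tK)^m$ is a product of Hilbert--Schmidt operators and hence genuinely trace-class --- and it is repaired by the regularized trace \eqref{def-Tr} together with the choice of integral kernel for which $\Tr(\chi_\Delta K\chi_\Delta)=\int_\Delta K(x,x)\,d\mu(x)$ and \eqref{trace-integral} hold (equivalently, one works with the modified determinant as in Lytvynov \cite{Ly}). With that substitution your majorant estimate goes through with $\|\chi_\Delta K\|_{\mathrm{HS}}$ and $\int_\Delta|K(x,x)|\,d\mu(x)$ in place of $\|\chi_\Delta K\|_1$, and the rest of your bookkeeping is sound.
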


Using Lemma~\ref{lemma-2} we are able to estimate the cumulants of $S_f$. We are going to prove the following claim.
\begin{claim}\label{claim-3}
	Under the assumptions of Theorem~\ref{thm-1}, for any $m\in\mathbb{N_+}$, any $(n_1,\cdots,n_m)\in\mathbb{N}_+^m$, and any $\varepsilon>0$,
	\begin{equation*}
		\begin{split}
			\Tr(K_Lf_L^{n_1}K_Lf_L^{n_2}\cdots K_Lf_L^{n_m})=O((\Var_L(S_{f_L}))^{\delta+\varepsilon})\,\,\,\text{as}\,\,\,L\to +\infty.
		\end{split}
	\end{equation*}
\end{claim}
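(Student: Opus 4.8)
The plan is to replace Soshnikov's positivity argument, which is unavailable here because for a $J$-Hermitian kernel $K(x,y)K(y,x)=-|K(x,y)|^2$ whenever $x$ and $y$ lie in different blocks, by transferring positivity from $K$ to the genuinely self-adjoint operator $\widehat{K}$, which satisfies $0\le\widehat{K}\le1$ by \eqref{0<Khat<1}. Throughout write $K=K_L$, $f=f_L$, $g=|f|$, $N=n_1+\cdots+n_m$ and $\sigma_L^2=\Var_L(S_{f_L})$.

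The crux, and the step I expect to be the main obstacle, is the pointwise inequality
\[
\int_X|K(x,y)|^2\,d\mu_L(y)\le K(x,x)\qquad\text{for }\mu_L\text{-a.e. }x,
\]
which is the $J$-Hermitian substitute for the Hermitian bound $(K^2)(x,x)\le K(x,x)$ and is meaningful since $K(x,x)=\rho_1(x)\ge0$. To prove it I would note that $\int_X|K(x,y)|^2\,d\mu_L(y)=(KK^*)(x,x)$ and that, from $\widehat K=KP_1+(1-K)P_2$, one has $K=\widehat K J+P_2$, hence $KK^*=\widehat K^2-\widehat K P_2-P_2\widehat K+P_2$ using $JP_2=P_2J=-P_2$. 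Reading this identity on the diagonal and using that $P_2$ acts as multiplication by the indicator of $X_L^{(2)}$ gives $(KK^*)(x,x)=(\widehat K^2)(x,x)$ for $x\in X_L^{(1)}$ and $(KK^*)(x,x)=((1-\widehat K)^2)(x,x)$ for $x\in X_L^{(2)}$. Since $0\le\widehat K\le1$ forces $\widehat K^2\le\widehat K$ and $(1-\widehat K)^2\le1-\widehat K$, comparing diagonals yields $(KK^*)(x,x)\le\widehat K(x,x)=K(x,x)$ on $X_L^{(1)}$ and $(KK^*)(x,x)\le(1-\widehat K)(x,x)=K(x,x)$ on $X_L^{(2)}$. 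Because $|K(x,y)|=|K(y,x)|$, the integral over the first variable obeys the same bound, i.e. $(K^*K)(x,x)\le K(x,x)$ as well.

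From this I would extract two consequences. First, the Hilbert-Schmidt norm of each factor satisfies $\|Kf^{n_i}\|_{\mathrm{HS}}^2=\int_X g^{2n_i}(y)(K^*K)(y,y)\,d\mu_L(y)\le\|f\|_\infty^{2n_i-1}\,\mathbb{E}_LS_{|f|}$, where I pull out $\|f\|_\infty^{2n_i-1}$, apply the pointwise bound and recall $\mathbb{E}_LS_{|f|}=\int_X g\,K(x,x)\,d\mu_L$. Second, the operator norm is uniformly controlled: from $K=\widehat K J+P_2$, $\|\widehat K\|\le1$ and $\|J\|=1$ one gets $\|K\|\le2$, so $\|Kf^{n_i}\|\le2\|f\|_\infty^{n_i}$. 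The case $m=1$ is then immediate, since $|\Tr(Kf^{n_1})|=|\int_X f^{n_1}K(x,x)\,d\mu_L|\le\|f\|_\infty^{n_1-1}\,\mathbb{E}_LS_{|f|}$.

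For $m\ge2$ I would apply the generalized Hölder inequality for Schatten norms to $\Tr(Kf^{n_1}\cdots Kf^{n_m})$ with exponents $(2,2,\infty,\dots,\infty)$, estimating two of the factors in Hilbert-Schmidt norm and the remaining $m-2$ in operator norm. Combining the two consequences above, the two half-powers $(\mathbb{E}_LS_{|f|})^{1/2}$ merge into one and the powers of $\|f\|_\infty$ add up to $N-1$, giving $|\Tr(Kf^{n_1}\cdots Kf^{n_m})|\le2^{m-2}\|f\|_\infty^{N-1}\,\mathbb{E}_LS_{|f|}$. Finally I would feed in the hypotheses of Theorem~\ref{thm-1}: the third gives $\mathbb{E}_LS_{|f|}=O(\sigma_L^{2\delta})$, and the second gives $\|f\|_\infty^{N-1}=o(\sigma_L^{2\varepsilon})$ for any $\varepsilon>0$ (taking its exponent to be $\varepsilon/(N-1)$), whence the product is $O(\sigma_L^{2(\delta+\varepsilon)})=O((\Var_L(S_{f_L}))^{\delta+\varepsilon})$. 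The only genuinely new point compared with the Hermitian theory is the diagonal inequality above; once the contractivity and self-adjointness of $\widehat K$ have supplied that substitute for positivity, the remaining estimates are routine Schatten-norm bookkeeping.
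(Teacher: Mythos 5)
Your proposal is correct, but it takes a genuinely different route from the paper. The paper first splits $f_L$ into $f_{L,+}-f_{L,-}$, then treats $m=2$ by decomposing the trace into the four block terms $\Tr(K_LP_L^{(i)}f_{L,\upsilon_1}^{n_1}K_LP_L^{(j)}f_{L,\upsilon_2}^{n_2})$, showing via \eqref{J-Hermitian} that each is sign-definite (non-negative for $i=j$, non-positive for $i\neq j$) and then bounding it by the non-negativity of the variance of an auxiliary linear statistic; the case $m\geq3$ is reduced to $m=2$ by Cauchy--Schwarz for traces together with $\|K_L\|\leq1$. You instead extract a single pointwise inequality
\begin{equation*}
(K_LK_L^*)(x,x)=\int_{X_L}|K_L(x,y)|^2\,d\mu_L(y)\leq K_L(x,x)\quad\text{a.e.},
\end{equation*}
which is exactly the $J$-Hermitian analogue of Soshnikov's $K-K^2\geq0$, obtained by reading $\widehat K_L-\widehat K_L^2\geq0$ (and $(1-\widehat K_L)-(1-\widehat K_L)^2\geq0$) on the diagonal of each block; all cases $m\geq2$ then follow uniformly from H\"older for Schatten norms with exponents $(2,2,\infty,\dots,\infty)$, with no need for the positive/negative-part decomposition. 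Your identity $K=\widehat KJ+P_2$ and the resulting block computations check out ($(KK^*)(x,x)=(\widehat K^2)(x,x)$ on $X^{(1)}_L$ and $=((1-\widehat K)^2)(x,x)$ on $X^{(2)}_L$), as does the final bookkeeping giving $|\Tr(K_Lf_L^{n_1}\cdots K_Lf_L^{n_m})|\lesssim\|f_L\|_\infty^{N-1}\,\mathbb{E}_LS_{|f_L|}$. The one point to make explicit is the passage from operator positivity to the a.e.\ diagonal inequality: since $\widehat K_L$ itself is not an integral operator (the summand $P_2$ has no kernel), one should apply the standard argument --- $\Tr(\chi_\Delta T\chi_\Delta)\geq0$ for all $\Delta\in\mathcal{B}_0$ forces $T(x,x)\geq0$ a.e.\ --- to the blocks $P_L^{(i)}(\widehat K_L-\widehat K_L^2)P_L^{(i)}$, which are honest locally trace-class integral operators under the standing $\mathscr{L}_{1|2}$ assumptions. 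Your approach is shorter and identifies the structural reason the Hermitian argument survives; the paper's stays entirely at the level of traces of operator products and so avoids any discussion of diagonals of products.
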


Now suppose that we have proved Claim~\ref{claim-3}, then it follows from fomula \eqref{trace-integral} and Lemma~\ref{lemma-2} that for any $n\in\mathbb{N_+}$ and any $\varepsilon>0$,
\begin{equation}\label{estimate-cumu}
	\begin{split}
		C_n(S_{f_L})=O((\Var_L(S_{f_L}))^{\delta+\varepsilon})\,\,\,\text{as}\,\,\,L\to +\infty.
	\end{split}
\end{equation}
Denote
\begin{equation*}
	\begin{split}
		\eta_L:=\frac{S_{f_L}-\mathbb{E}_LS_{f_L}}{\sqrt{{\Var_L}S_{f_L}}}.
	\end{split}
\end{equation*}
Then by the definition of cumulants \eqref{def-cumu}, it is easy to see that $C_1(\eta_L)=0$, $C_2(\eta_L)=1$ for all $L\geq 0$. And when $n>2$, for any $\varepsilon>0$, by \eqref{estimate-cumu} we obtain 
\begin{equation*}
	C_n(\eta_L)=\frac{C_n(S_{f_L})}{(\Var_L(S_{f_L}))^{\frac{n}{2}}}=\frac{O((\Var_L(S_{f_L}))^{\delta+\varepsilon})}{(\Var_L(S_{f_L}))^{\frac{n}{2}}}\,\,\,\text{as}\,\,\,L\to +\infty\,,
\end{equation*}
hence $C_n(\eta_L)\to 0$ as $L\to +\infty$ when $n>\max{\{2,2\delta\}}$. Thus by Lemma~\ref{lemma-1}, $\eta_L$ converges in distribution to $N(0,1)$ as $L\to +\infty$.

\begin{proof}[Proof of Claim~\ref{claim-3}]
Note that $\Tr(K_Lf_L^{n_1}K_Lf_L^{n_2}\cdots K_Lf_L^{n_m})$ is a linear combination of terms $\Tr(K_Lf_{L,\upsilon_1}^{n_1}K_Lf_{L,\upsilon_2}^{n_2}\cdots K_Lf_{L,\upsilon_m}^{n_m})$, where $\upsilon_i$ represents $+$ or $-$, $i=1,2,\cdots,m$, and $f_{L,+}=\max{\{f_L,0\}}$, $f_{L,-}=\max{\{-f_L,0\}}$. Let us fix the choice of $\upsilon_i$ in each of the factors, it is enough to prove that for any $\varepsilon>0$,
\begin{equation*}
	\begin{split}
		\Tr(K_Lf_{L,\upsilon_1}^{n_1}K_Lf_{L,\upsilon_2}^{n_2}\cdots K_Lf_{L,\upsilon_m}^{n_m})=O((\Var_L(S_{f_L}))^{\delta+\varepsilon})\,\,\,\text{as}\,\,\,L\to +\infty.
	\end{split}
\end{equation*}

For the case $m=1$, by \eqref{def-dpp} and \eqref{trace-integral}, we have
\begin{equation}\label{Tr(Kf)}
	\begin{split}
		\Tr(K_Lf_{L,\upsilon_1}^{n_1})=\int_{X_L}f_{L,\upsilon_1}^{n_1}(x)K_L(x,x)d\mu_L(x)\geq0.
	\end{split}
\end{equation}
It follows from \eqref{expecsf} that
\begin{equation*}
   	\begin{split}
		|\Tr(K_Lf_{L,\upsilon_1}^{n_1})|&=\Tr(K_Lf_{L,\upsilon_1}^{n_1})\\
     	&\leq\left\|f_{L,\upsilon_1}\right\|_\infty^{n_1-1}\int_{X_L}f_{L,\upsilon_1}(x)K_L(x,x)d\mu_L(x)\\
   		&\leq\left\|f_L\right\|_\infty^{n_1-1}\int_{X_L}|f_L(x)|K_L(x,x)d\mu_L(x)\\
   		&=\left\|f_L\right\|_\infty^{n_1-1}\mathbb{E}_LS_{|f_L|}\\
   		&=O((\Var_L(S_{f_L}))^{\delta+\varepsilon}).
   	\end{split}
\end{equation*}

For the case $m=2$, 
\begin{equation*}
	\begin{split}
		\Tr(K_Lf_{L,\upsilon_1}^{n_1}K_Lf_{L,\upsilon_2}^{n_2})&=\Tr\big[K_L(P_L^{(1)}f_{L,\upsilon_1}^{n_1}+P_L^{(2)}f_{L,\upsilon_1}^{n_1})K_L(P_L^{(1)}f_{L,\upsilon_2}^{n_2}+P_L^{(2)}f_{L,\upsilon_2}^{n_2})\big]\\
		&=\Tr(K_LP_L^{(1)}f_{L,\upsilon_1}^{n_1}K_LP_L^{(1)}f_{L,\upsilon_2}^{n_2})+\Tr(K_LP_L^{(1)}f_{L,\upsilon_1}^{n_1}K_LP_L^{(2)}f_{L,\upsilon_2}^{n_2})\\
		&\quad+\Tr(K_LP_L^{(2)}f_{L,\upsilon_1}^{n_1}K_LP_L^{(2)}f_{L,\upsilon_2}^{n_2})+\Tr(K_LP_L^{(2)}f_{L,\upsilon_1}^{n_1}K_LP_L^{(1)}f_{L,\upsilon_2}^{n_2})\\
		&=I_1(L)+I_2(L)+I_3(L)+I_4(L),
    \end{split}
\end{equation*}
where $P_L^{(i)}$ is the orthogonal projection of $L^2(X_L,\mu_L)$ onto $L^2(X_L^{(i)},\mu_L)$, $i=1,2$. Since the arguments for $I_3(L)$ and $I_4(L)$ are similar to those for $I_1(L)$ and $I_2(L)$, we only prove $I_1(L)=O((\Var_L(S_{f_L}))^{\delta+\varepsilon})$ and $I_2(L)=O((\Var_L(S_{f_L}))^{\delta+\varepsilon})$.

Note that from \eqref{J-Hermitian}, we have $(P_L^{(1)}K_LP_L^{(1)})^*=P_L^{(1)}K_LP_L^{(1)}$, hence
\begin{equation}\label{I1(L)}
	\begin{split}
		I_1(L)&=\Tr(K_LP_L^{(1)}f_{L,\upsilon_1}^{n_1}K_LP_L^{(1)}f_{L,\upsilon_2}^{n_2})\\
		&=\Tr(K_LP_L^{(1)}f_{L,\upsilon_1}^{n_1}P_L^{(1)}K_LP_L^{(1)}f_{L,\upsilon_2}^{n_2}P_L^{(1)})\\
		&=\Tr(f_{L,\upsilon_2}^{n_2/2}P_L^{(1)}K_LP_L^{(1)}f_{L,\upsilon_1}^{n_1}P_L^{(1)}K_LP_L^{(1)}f_{L,\upsilon_2}^{n_2/2})\\
		&=\Tr\big[(f_{L,\upsilon_2}^{n_2/2}P_L^{(1)}K_LP_L^{(1)}f_{L,\upsilon_1}^{n_1/2})(f_{L,\upsilon_2}^{n_2/2}P_L^{(1)}K_LP_L^{(1)}f_{L,\upsilon_1}^{n_1/2})^*\big]\\
		&\geq0.
	\end{split}
\end{equation}
It follows from \eqref{trace-integral} and \eqref{varsf} that 
\begin{equation*}
	\begin{split}
		\Var_L(S_{P_L^{(1)}f_{L,\upsilon_1}^{n_1}+P_L^{(1)}f_{L,\upsilon_2}^{n_2}})&=\Tr\big[K_L(P_L^{(1)}f_{L,\upsilon_1}^{n_1}+P_L^{(1)}f_{L,\upsilon_2}^{n_2})^2\big]\\
		&\quad-\Tr\big[K_L(P_L^{(1)}f_{L,\upsilon_1}^{n_1}+P_L^{(1)}f_{L,\upsilon_2}^{n_2})K_L(P_L^{(1)}f_{L,\upsilon_1}^{n_1}+P_L^{(1)}f_{L,\upsilon_2}^{n_2})\big]\\
		&=\Tr(K_LP_L^{(1)}f_{L,\upsilon_1}^{2n_1})+\Tr(K_LP_L^{(1)}f_{L,\upsilon_2}^{2n_2})+2\Tr(K_LP_L^{(1)}f_{L,\upsilon_1}^{n_1}f_{L,\upsilon_2}^{n_2})\\
		&\quad-\Tr(K_LP_L^{(1)}f_{L,\upsilon_1}^{n_1}K_LP_L^{(1)}f_{L,\upsilon_1}^{n_1})-\Tr(K_LP_L^{(1)}f_{L,\upsilon_2}^{n_2}K_LP_L^{(1)}f_{L,\upsilon_2}^{n_2})\\
		&\quad-2I_1(L)\\
		&\leq\Tr(K_Lf_{L,\upsilon_1}^{2n_1})+\Tr(K_Lf_{L,\upsilon_2}^{2n_2})+2\Tr(K_Lf_{L,\upsilon_1}^{n_1+n_2})-2I_1(L),
	\end{split}
\end{equation*}
where the last inequality is based on \eqref{Tr(Kf)} and \eqref{I1(L)}. Note that the variance of a random variable is non-negative, we get
\begin{equation*}
	\begin{split}
		|I_1(L)|&=I_1(L)\\
		&\leq\frac{1}{2}\big[\Tr(K_Lf_{L,\upsilon_1}^{2n_1})+\Tr(K_Lf_{L,\upsilon_2}^{2n_2})+2\Tr(K_Lf_{L,\upsilon_1}^{n_1+n_2})\big]\\
		&=O((\Var_L(S_{f_L}))^{\delta+\varepsilon}),
	\end{split}
\end{equation*}
where the last equality is from the case $m=1$.

As for $I_2(L)$, note that from \eqref{J-Hermitian}, we have $(P_L^{(2)}K_LP_L^{(1)})^*=-P_L^{(1)}K_LP_L^{(2)}$, hence
\begin{equation}\label{I2(L)}
	\begin{split}
		I_2(L)&=\Tr(K_LP_L^{(1)}f_{L,\upsilon_1}^{n_1}K_LP_L^{(2)}f_{L,\upsilon_2}^{n_2})\\
		&=\Tr(K_LP_L^{(1)}f_{L,\upsilon_1}^{n_1}P_L^{(1)}K_LP_L^{(2)}f_{L,\upsilon_2}^{n_2}P_L^{(2)})\\
		&=\Tr(f_{L,\upsilon_2}^{n_2/2}P_L^{(2)}K_LP_L^{(1)}f_{L,\upsilon_1}^{n_1}P_L^{(1)}K_LP_L^{(2)}f_{L,\upsilon_2}^{n_2/2})\\
		&=-\Tr\big[(f_{L,\upsilon_2}^{n_2/2}P_L^{(2)}K_LP_L^{(1)}f_{L,\upsilon_1}^{n_1/2})(f_{L,\upsilon_2}^{n_2/2}P_L^{(2)}K_LP_L^{(1)}f_{L,\upsilon_1}^{n_1/2})^*\big]\\
		&\leq0.
	\end{split}
\end{equation}
It follows from \eqref{trace-integral} and \eqref{varsf} that 
\begin{equation*}
	\begin{split}
		\Var_L(S_{P_L^{(1)}f_{L,\upsilon_1}^{n_1}+P_L^{(2)}f_{L,\upsilon_2}^{n_2}})&=\Tr\big[K_L(P_L^{(1)}f_{L,\upsilon_1}^{n_1}+P_L^{(2)}f_{L,\upsilon_2}^{n_2})^2\big]\\
		&\quad-\Tr\big[K_L(P_L^{(1)}f_{L,\upsilon_1}^{n_1}+P_L^{(2)}f_{L,\upsilon_2}^{n_2})K_L(P_L^{(1)}f_{L,\upsilon_1}^{n_1}+P_L^{(2)}f_{L,\upsilon_2}^{n_2})\big]\\
		&=\Tr(K_LP_L^{(1)}f_{L,\upsilon_1}^{2n_1})+\Tr(K_LP_L^{(2)}f_{L,\upsilon_2}^{2n_2})\\
		&\quad-\Tr(K_LP_L^{(1)}f_{L,\upsilon_1}^{n_1}K_LP_L^{(1)}f_{L,\upsilon_1}^{n_1})-\Tr(K_LP_L^{(2)}f_{L,\upsilon_2}^{n_2}K_LP_L^{(2)}f_{L,\upsilon_2}^{n_2})\\
		&\quad-2I_2(L),
	\end{split}
\end{equation*}
and
\begin{equation*}
	\begin{split}
		\Var_L(S_{P_L^{(1)}f_{L,\upsilon_1}^{n_1}})+\Var_L(S_{P_L^{(2)}f_{L,\upsilon_2}^{n_2}})&=\Tr(K_LP_L^{(1)}f_{L,\upsilon_1}^{2n_1})-\Tr(K_LP_L^{(1)}f_{L,\upsilon_1}^{n_1}K_LP_L^{(1)}f_{L,\upsilon_1}^{n_1})\\
		&\quad+\Tr(K_LP_L^{(2)}f_{L,\upsilon_2}^{2n_2})-\Tr(K_LP_L^{(2)}f_{L,\upsilon_2}^{n_2}K_LP_L^{(2)}f_{L,\upsilon_2}^{n_2}).
	\end{split}
\end{equation*}
Using the basic fact
\begin{equation*}
	\begin{split}
		\Var_L(S_{P_L^{(1)}f_{L,\upsilon_1}^{n_1}+P_L^{(2)}f_{L,\upsilon_2}^{n_2}})&=\Var_L(S_{P_L^{(1)}f_{L,\upsilon_1}^{n_1}}+S_{P_L^{(2)}f_{L,\upsilon_2}^{n_2}})\\
		&\leq2\big[\Var_L(S_{P_L^{(1)}f_{L,\upsilon_1}^{n_1}})+\Var_L(S_{P_L^{(2)}f_{L,\upsilon_2}^{n_2}})\big],
	\end{split}
\end{equation*}
together with \eqref{Tr(Kf)}, \eqref{I1(L)}, \eqref{I2(L)} and the case $m=1$, we obtain
\begin{equation*}
	\begin{split}
		|I_2(L)|&=-I_2(L)\\
		&\leq\frac{1}{2}\big[\Tr(K_LP_L^{(1)}f_{L,\upsilon_1}^{2n_1})+\Tr(K_LP_L^{(2)}f_{L,\upsilon_2}^{2n_2})\\
		&\quad\quad-\Tr(K_LP_L^{(1)}f_{L,\upsilon_1}^{n_1}K_LP_L^{(1)}f_{L,\upsilon_1}^{n_1})-\Tr(K_LP_L^{(2)}f_{L,\upsilon_2}^{n_2}K_LP_L^{(2)}f_{L,\upsilon_2}^{n_2})\big]\\
		&\leq\frac{1}{2}\big[\Tr(K_Lf_{L,\upsilon_1}^{2n_1})+\Tr(K_Lf_{L,\upsilon_2}^{2n_2})\big]\\
		&=O((\Var_L(S_{f_L}))^{\delta+\varepsilon}),
	\end{split}
\end{equation*}
where we used the inequality $\Tr(K_LP_L^{(2)}f_{L,\upsilon_2}^{n_2}K_LP_L^{(2)}f_{L,\upsilon_2}^{n_2})\geq0$ by arguments similar to \eqref{I1(L)}.

For the case $m\geq3$, using the Cauchy-Schwarz inequality $|\Tr(AB)|^2\leq\Tr(AA^*)\Tr(BB^*)$ for Hilbert-Schmidt operators $A$ and $B$, we have
\begin{equation*}
	\begin{split}
		&\quad\,\,\Big|\Tr(K_Lf_{L,\upsilon_1}^{n_1}K_Lf_{L,\upsilon_2}^{n_2}K_Lf_{L,\upsilon_3}^{n_3}K_Lf_{L,\upsilon_4}^{n_4}\cdots K_Lf_{L,\upsilon_m}^{n_m})\Big|^2\\
		&=\Big|\Tr\big[(f_{L,\upsilon_1}^{n_1}K_Lf_{L,\upsilon_2}^{{n_2}/2})(f_{L,\upsilon_2}^{{n_2}/2}K_Lf_{L,\upsilon_3}^{n_3}K_Lf_{L,\upsilon_4}^{n_4}\cdots K_Lf_{L,\upsilon_m}^{n_m}K_L)\big]\Big|^2\\
		&\leq\Tr\big[(f_{L,\upsilon_1}^{n_1}K_Lf_{L,\upsilon_2}^{{n_2}/2})(f_{L,\upsilon_1}^{n_1}K_Lf_{L,\upsilon_2}^{{n_2}/2})^*\big]\\
		&\quad\times\Tr\big[(f_{L,\upsilon_2}^{{n_2}/2}K_Lf_{L,\upsilon_3}^{n_3}K_Lf_{L,\upsilon_4}^{n_4}\cdots K_Lf_{L,\upsilon_m}^{n_m}K_L)(f_{L,\upsilon_2}^{{n_2}/2}K_Lf_{L,\upsilon_3}^{n_3}K_Lf_{L,\upsilon_4}^{n_4}\cdots K_Lf_{L,\upsilon_m}^{n_m}K_L)^*\big]\\
		&=\Tr(K_Lf_{L,\upsilon_2}^{{n_2}}K_L^*f_{L,\upsilon_1}^{{2n_1}})\\
		&\quad\times\Tr(f_{L,\upsilon_2}^{{n_2}/2}K_Lf_{L,\upsilon_3}^{n_3}K_Lf_{L,\upsilon_4}^{n_4}\cdots K_Lf_{L,\upsilon_m}^{n_m}K_LK_L^*f_{L,\upsilon_m}^{n_m}K_L^*\cdots f_{L,\upsilon_4}^{n_4}K_L^*f_{L,\upsilon_3}^{n_3}K_L^*f_{L,\upsilon_2}^{{n_2}/2}).\\
	\end{split}
\end{equation*}
And using the inequality $|\Tr(AB)|\leq\Tr(A)\left\|B\right\|$ for positive operator $A$ which is trace-class and bounded linear operator $B$, see, e.g. \cite[Theorem~2.7, ~2.14]{Si}, we have
\begin{equation*}
	\begin{split}
		&\quad\,\,\Tr(f_{L,\upsilon_2}^{{n_2}/2}K_Lf_{L,\upsilon_3}^{n_3}K_Lf_{L,\upsilon_4}^{n_4}\cdots K_Lf_{L,\upsilon_m}^{n_m}K_LK_L^*f_{L,\upsilon_m}^{n_m}K_L^*\cdots f_{L,\upsilon_4}^{n_4}K_L^*f_{L,\upsilon_3}^{n_3}K_L^*f_{L,\upsilon_2}^{{n_2}/2})\\
		&=\Tr(f_{L,\upsilon_3}^{n_3}K_L^*f_{L,\upsilon_2}^{{n_2}/2}f_{L,\upsilon_2}^{{n_2}/2}K_Lf_{L,\upsilon_3}^{n_3}K_Lf_{L,\upsilon_4}^{n_4}\cdots K_Lf_{L,\upsilon_m}^{n_m}K_LK_L^*f_{L,\upsilon_m}^{n_m}K_L^*\cdots f_{L,\upsilon_4}^{n_4}K_L^*)\\
		&=\Tr\big[(f_{L,\upsilon_2}^{{n_2}/2}K_Lf_{L,\upsilon_3}^{n_3})^*(f_{L,\upsilon_2}^{{n_2}/2}K_Lf_{L,\upsilon_3}^{n_3})(K_Lf_{L,\upsilon_4}^{n_4}\cdots K_Lf_{L,\upsilon_m}^{n_m}K_L)(K_Lf_{L,\upsilon_4}^{n_4}\cdots K_Lf_{L,\upsilon_m}^{n_m}K_L)^*\big]\\
		&\leq\Tr\big[(f_{L,\upsilon_2}^{{n_2}/2}K_Lf_{L,\upsilon_3}^{n_3})^*(f_{L,\upsilon_2}^{{n_2}/2}K_Lf_{L,\upsilon_3}^{n_3})\big]\left\|(K_Lf_{L,\upsilon_4}^{n_4}\cdots K_Lf_{L,\upsilon_m}^{n_m}K_L)(K_Lf_{L,\upsilon_4}^{n_4}\cdots K_Lf_{L,\upsilon_m}^{n_m}K_L)^*\right\|\\
		&\leq\Tr(K_Lf_{L,\upsilon_3}^{{2n_3}}K_L^*f_{L,\upsilon_2}^{{n_2}})\left\|K_Lf_{L,\upsilon_4}^{n_4}\cdots K_Lf_{L,\upsilon_m}^{n_m}K_L\right\|^2.
	\end{split}
\end{equation*}
These estimations deduce that
\begin{equation*}
	\begin{split}
		&\quad\,\,\Big|\Tr(K_Lf_{L,\upsilon_1}^{n_1}K_Lf_{L,\upsilon_2}^{n_2}K_Lf_{L,\upsilon_3}^{n_3}K_Lf_{L,\upsilon_4}^{n_4}\cdots K_Lf_{L,\upsilon_m}^{n_m})\Big|^2\\
		&\leq\Tr(K_Lf_{L,\upsilon_2}^{{n_2}}K_L^*f_{L,\upsilon_1}^{{2n_1}})\Tr(K_Lf_{L,\upsilon_3}^{{2n_3}}K_L^*f_{L,\upsilon_2}^{{n_2}})\left\|K_Lf_{L,\upsilon_4}^{n_4}\cdots K_Lf_{L,\upsilon_m}^{n_m}K_L\right\|^2\\
		&=J_1(L)J_2(L)\left\|D_L\right\|^2.
	\end{split}
\end{equation*}
Based on \eqref{0<Khat<1} and \cite[Proposition~7]{Ly}, we have $\left\|K_L\right\|\leq1$, then
\begin{equation*}
	\begin{split}
		\left\|D_L\right\|&=\left\|K_Lf_{L,\upsilon_4}^{n_4}\cdots K_Lf_{L,\upsilon_m}^{n_m}K_L\right\|\\
		&\leq\left\|f_{L,\upsilon_4}\right\|_{\infty}^{n_4}\cdots\left\|f_{L,\upsilon_m}\right\|_{\infty}^{n_m}\left\|K_L\right\|^{m-2}\\
		&\leq\left\|f_L\right\|_{\infty}^{n_4+\cdots+n_m}\\
		&=o((\Var_L(S_{f_L}))^\varepsilon).
	\end{split}
\end{equation*}
Split $J_1(L)$ into four parts as follow:
\begin{equation*}
	\begin{split}
		J_1(L)&=\Tr(K_Lf_{L,\upsilon_2}^{{n_2}}K_L^*f_{L,\upsilon_1}^{{2n_1}})\\
		&=\Tr\big[K_L(P_L^{(1)}f_{L,\upsilon_2}^{{n_2}}+P_L^{(2)}f_{L,\upsilon_2}^{{n_2}})K_L^*(P_L^{(1)}f_{L,\upsilon_1}^{{2n_1}}+P_L^{(2)}f_{L,\upsilon_1}^{{2n_1}})\big]\\
		&=\Tr(K_LP_L^{(1)}f_{L,\upsilon_2}^{{n_2}}K_L^*P_L^{(1)}f_{L,\upsilon_1}^{{2n_1}})+\Tr(K_LP_L^{(1)}f_{L,\upsilon_2}^{{n_2}}K_L^*P_L^{(2)}f_{L,\upsilon_1}^{{2n_1}})\\
		&\quad+\Tr(K_LP_L^{(2)}f_{L,\upsilon_2}^{{n_2}}K_L^*P_L^{(1)}f_{L,\upsilon_1}^{{2n_1}})+\Tr(K_LP_L^{(2)}f_{L,\upsilon_2}^{{n_2}}K_L^*P_L^{(2)}f_{L,\upsilon_1}^{{2n_1}}).\\
	\end{split}
\end{equation*}
It follows from \eqref{J-Hermitian} and the case $m=2$ that
\begin{equation*}
	\begin{split}
		\Tr(K_LP_L^{(1)}f_{L,\upsilon_2}^{{n_2}}K_L^*P_L^{(1)}f_{L,\upsilon_1}^{{2n_1}})&=\Tr\big[K_Lf_{L,\upsilon_2}^{{n_2}}(P_L^{(1)}K_L^*P_L^{(1)})f_{L,\upsilon_1}^{{2n_1}}\big]\\
		&=\Tr\big[K_Lf_{L,\upsilon_2}^{{n_2}}(P_L^{(1)}K_LP_L^{(1)})f_{L,\upsilon_1}^{{2n_1}}\big]\\
		&=\Tr(K_LP_L^{(1)}f_{L,\upsilon_2}^{{n_2}}K_LP_L^{(1)}f_{L,\upsilon_1}^{{2n_1}})\\
		&=O((\Var_L(S_{f_L}))^{\delta+\varepsilon}),
	\end{split}
\end{equation*}
and
\begin{equation*}
	\begin{split}
	    \Tr(K_LP_L^{(1)}f_{L,\upsilon_2}^{{n_2}}K_L^*P_L^{(2)}f_{L,\upsilon_1}^{{2n_1}})&=\Tr\big[K_Lf_{L,\upsilon_2}^{{n_2}}(P_L^{(1)}K_L^*P_L^{(2)})f_{L,\upsilon_1}^{{2n_1}}\big]\\
		&=-\Tr\big[K_Lf_{L,\upsilon_2}^{{n_2}}(P_L^{(1)}K_LP_L^{(2)})f_{L,\upsilon_1}^{{2n_1}}\big]\\
		&=-\Tr(K_LP_L^{(1)}f_{L,\upsilon_2}^{{n_2}}K_LP_L^{(2)}f_{L,\upsilon_1}^{{2n_1}})\\
		&=O((\Var_L(S_{f_L}))^{\delta+\varepsilon}). 
	\end{split}
\end{equation*}
Similarly, we have
\begin{equation*}
        \Tr(K_LP_L^{(2)}f_{L,\upsilon_2}^{{n_2}}K_L^*P_L^{(1)}f_{L,\upsilon_1}^{{2n_1}})=O((\Var_L(S_{f_L}))^{\delta+\varepsilon}),
\end{equation*}
and
\begin{equation*}
        \Tr(K_LP_L^{(2)}f_{L,\upsilon_2}^{{n_2}}K_L^*P_L^{(2)}f_{L,\upsilon_1}^{{2n_1}})=O((\Var_L(S_{f_L}))^{\delta+\varepsilon}), 
\end{equation*}
hence we obtain $J_1(L)=O((\Var_L(S_{f_L}))^{\delta+\varepsilon})$.
As for $J_2(L)$, we also have $J_2(L)=O((\Var_L(S_{f_L}))^{\delta+\varepsilon})$ by arguments similar to $J_1(L)$. Together with the estimations of $\left\|D_L\right\|$, $J_1(L)$ and $J_2(L)$, this concludes that
\begin{equation*}
	\begin{split}
		\Tr(K_Lf_{L,\upsilon_1}^{n_1}K_Lf_{L,\upsilon_2}^{n_2}K_Lf_{L,\upsilon_3}^{n_3}K_Lf_{L,\upsilon_4}^{n_4}\cdots K_Lf_{L,\upsilon_m}^{n_m})=O((\Var_L(S_{f_L}))^{\delta+\varepsilon}).
	\end{split}
\end{equation*}

This completes the proof of Theorem~\ref{thm-1}.
\end{proof}

\subsection{Proof of Proposition~\ref{prop-2}}
By slightly abusing the notation, the associated integral operator $A$ with $J$-Hermitian translation-invariant kernel $A$ in \eqref{J-HT-IIK} can be written in block form:
\begin{equation*}
	\begin{split}
		A=
		\begin{bmatrix}
			F&G\\
			-G^*&H
		\end{bmatrix},
	\end{split}
\end{equation*}
and then the operator $\widehat A$ denoted by \eqref{def-Khat} has the following block form:
\begin{equation*}
	\begin{split}
		\widehat A=
		\begin{bmatrix}
			F&G\\
			G^*&P_2-H
		\end{bmatrix}.
	\end{split}
\end{equation*}
It follows from \eqref{0<Khat<1} that there exists a determinantal point process on $\mathbb{R}^d\sqcup \mathbb{R}^d$ with kernel $A$ if and only if $0\leq\widehat A\leq1$.

Using the fomula $\mathscr{F}(T_1*T_2)=(\mathscr{F}T_1)(\mathscr{F}T_2)$ when $T_1, T_2\in L^2(\mathbb{R}^d)$, for any function in $L^2(\mathbb{R}^d\sqcup\mathbb{R}^d)=L^2(\mathbb{R}^d)\oplus L^2(\mathbb{R}^d)$, i.e. for any $f_1,f_2\in L^2(\mathbb{R}^d)$, we have 
\begin{equation*}
	\begin{split}
		\begin{bmatrix}
			\mathscr{F}&0\\
			0&\mathscr{F}
		\end{bmatrix}
	    \begin{bmatrix}
	    	F&G\\
	    	G^*&P_2-H
	    \end{bmatrix}
        \begin{bmatrix}
        	f_1\\
        	f_2
        \end{bmatrix}
        =
        \begin{bmatrix}
        	\mathscr{F}F&\mathscr{F}G\\
        	\overline{\mathscr{F}G}&1-\mathscr{F}H
        \end{bmatrix}
        \begin{bmatrix}
        	\mathscr{F}&0\\
        	0&\mathscr{F}
        \end{bmatrix}
        \begin{bmatrix}
        	f_1\\
        	f_2
        \end{bmatrix},
	\end{split}
\end{equation*}
hence
\begin{equation*}
	\begin{split}
		\begin{bmatrix}
			F&G\\
			G^*&P_2-H
		\end{bmatrix}
	    =
	    \begin{bmatrix}
	    	\mathscr{F}^{-1}&0\\
	    	0&\mathscr{F}^{-1}
	    \end{bmatrix}
		\begin{bmatrix}
			\mathscr{F}F&\mathscr{F}G\\
			\overline{\mathscr{F}G}&1-\mathscr{F}H
		\end{bmatrix}
		\begin{bmatrix}
			\mathscr{F}&0\\
			0&\mathscr{F}
		\end{bmatrix}.
	\end{split}
\end{equation*}
Since the Fourier transform $\begin{bmatrix}\mathscr{F}&0\\0&\mathscr{F}\end{bmatrix}$ is an unitary operator on $L^2(\mathbb{R}^d\sqcup\mathbb{R}^d)$, the operator $\widehat A=\begin{bmatrix}F&G\\G^*&P_2-H\end{bmatrix}$ and the multiplication operator $\begin{bmatrix}\mathscr{F}F&\mathscr{F}G\\\overline{\mathscr{F}G}&1-\mathscr{F}H
\end{bmatrix}$ are unitary equivalent, hence $0\leq\widehat A\leq1$ is equivalent to
\begin{equation}\label{Four-tran-matr}
	\begin{split}
		0\leq
		\begin{bmatrix}
			\mathscr{F}F&\mathscr{F}G\\
			\overline{\mathscr{F}G}&1-\mathscr{F}H
		\end{bmatrix}
		\leq1.
	\end{split}
\end{equation}

Denote
\begin{equation*}
	\begin{split}
		M_1(x)=\begin{bmatrix}(\mathscr{F}F)(x)&(\mathscr{F}G)(x)\\\overline{(\mathscr{F}G)}(x)&1-(\mathscr{F}H)(x)\end{bmatrix} \text{and } M_2(x)=\begin{bmatrix}1-(\mathscr{F}F)(x)&-(\mathscr{F}G)(x)\\-\overline{(\mathscr{F}G)}(x)&(\mathscr{F}H)(x)\end{bmatrix}.
	\end{split}
\end{equation*}
Hence \eqref{Four-tran-matr} implies that Proposition~\ref{prop-2} requires us to find the necessary and sufficient conditions such that
\begin{equation}\label{integral1}
	\begin{split}
		\int_{\mathbb{R}^d}\big(\overline{f_1}(x)\,\,\,\,\overline{f_2}(x)\big)M_1(x)\big({f_1}(x)\,\,\,\,{f_2}(x)\big)^{\mathrm{T}}dx\geq0
	\end{split}
\end{equation}
and
\begin{equation}\label{integral2}
	\begin{split}
		\int_{\mathbb{R}^d}\big(\overline{f_1}(x)\,\,\,\,\overline{f_2}(x)\big)M_2(x)\big({f_1}(x)\,\,\,\,{f_2}(x)\big)^{\mathrm{T}}dx\geq0
	\end{split}
\end{equation}
for any $f_1,f_2\in L^2(\mathbb{R}^d)$.
\begin{claim}\label{claim-4}
	\eqref{integral1} and \eqref{integral2} hold for any $f_1,f_2\in L^2(\mathbb{R}^d)$ if and only if for almost every $x\in\mathbb{R}^d$, the matrices $M_1(x)$ and $M_2(x)$ are non-negative definite.
\end{claim}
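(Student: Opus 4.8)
The plan is to prove the two implications separately, with essentially all the content lying in the ``only if'' direction.

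For the ``if'' direction the argument is immediate. Suppose $M_1(x)$ and $M_2(x)$ are non-negative definite for almost every $x$. Then for any $f_1,f_2\in L^2(\R^d)$ and almost every $x$, the integrand in \eqref{integral1} is exactly $v(x)^*M_1(x)v(x)\geq0$, where $v(x)=(f_1(x),f_2(x))^{\mathrm{T}}\in\C^2$; the integral of a pointwise non-negative measurable function is non-negative, which is \eqref{integral1}, and \eqref{integral2} follows identically with $M_2$. So I will dispose of this direction in one line.

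For the ``only if'' direction I will argue by contraposition. First I record that each $M_j(x)$ is Hermitian for almost every $x$: this holds because $\mathscr{F}F$ and $\mathscr{F}H$ are real-valued (as $F$ and $H$ are Hermitian kernels, so the diagonal entries are real) while the off-diagonal entries $\pm(\mathscr{F}G)(x)$ and $\pm\overline{(\mathscr{F}G)}(x)$ are complex conjugates of one another. Now suppose that for some $j\in\{1,2\}$ the set $E:=\{x:M_j(x)\text{ is not non-negative definite}\}$ has positive Lebesgue measure; the goal is to construct $f_1,f_2\in L^2(\R^d)$ violating the corresponding inequality. The crucial step is to replace the (a priori $x$-dependent) direction witnessing negativity by one from a fixed countable family. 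I fix a countable dense subset $\{v_k\}_{k\geq1}$ of the unit sphere of $\C^2$. For each $x\in E$ there is a unit vector $w$ with $w^*M_j(x)w<0$, and since $v\mapsto v^*M_j(x)v$ is continuous, density furnishes some $v_k$ with $v_k^*M_j(x)v_k<0$. Hence $E=\bigcup_k E_k$, where $E_k:=\{x:v_k^*M_j(x)v_k<0\}$ is measurable (the quantity $v_k^*M_j(x)v_k$ is a fixed linear combination of the measurable entries of $M_j$). As $|E|>0$, some $E_k$ has positive measure. Intersecting $E_k$ with a large ball and with a level set $\{x:-N\leq v_k^*M_j(x)v_k\leq -1/N\}$ for suitable $N$, I obtain a set $F$ with $0<|F|<\infty$ on which $v_k^*M_j(x)v_k$ is both bounded and bounded away from $0$. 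Setting $(f_1,f_2):=(\chi_F\,(v_k)_1,\,\chi_F\,(v_k)_2)\in L^2(\R^d)$, the left-hand side of the relevant inequality becomes $\int_F v_k^*M_j(x)v_k\,dx<0$, the desired contradiction.

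The main obstacle is exactly this measurable-selection step: passing from ``$M_j(x)$ fails to be non-negative definite on a positive-measure set'' to ``a single constant test vector $v_k$ already detects negativity on a positive-measure set'', and then arranging the truncation so that the resulting integral is a genuine finite negative number rather than merely $-\infty$. Once that reduction is in place, the construction of the explicit violating pair $(f_1,f_2)$ is routine, and the Hermitian symmetry of $M_j$ together with the pointwise reduction in the ``if'' direction require no further work.
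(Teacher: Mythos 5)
Your proof is correct. The ``if'' direction is identical to the paper's. For the ``only if'' direction you and the paper both reduce to a countable family of \emph{constant} test directions, but the mechanisms differ: the paper argues directly, testing \eqref{integral1} against $f_i=q\,\chi_B$ for rational constants $q$ and balls $B\subset[-N,N]^d$, and then invoking the Lebesgue differentiation theorem to pass from ``the integral over every ball is non-negative'' to the pointwise inequality off a null set $E_{mn}$ for each rational pair, before taking the countable union and using density in $\C^2$. You instead argue by contraposition: you cover the bad set $E$ by the measurable sets $E_k=\{x: v_k^*M_j(x)v_k<0\}$ indexed by a countable dense subset of the unit sphere, extract one $E_k$ of positive measure, truncate it to a set $F$ of finite positive measure on which the quadratic form is bounded away from $0$, and exhibit the explicit violating pair $f_i=\chi_F (v_k)_i$. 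Your route avoids the Lebesgue-point machinery entirely at the cost of the (routine) truncation step needed to make the violating integral a finite negative number; the paper's route proves the pointwise statement directly rather than refuting its negation. Both arguments are complete, and the measurability of $E_k$ and the density step you flag as the crux are handled correctly.
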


For a fixed $x\in\mathbb{R}^d$, the matrix $M_1(x)$ is non-negative definite is equivalent to saying that $(\mathscr{F}F)(x)\geq0$, $1-(\mathscr{F}H)(x)\geq0$, and $\det[M_1(x)]\geq0$, that is 
\begin{equation}\label{Fourier1}
	\begin{split}
		\left\{\begin{array}{l}
			(\mathscr{F}F)(x)\geq0\\
			(\mathscr{F}H)(x)\leq1\\
			|(\mathscr{F}G)(x)|^2\leq (\mathscr{F}F)(x)[1-(\mathscr{F}H)(x)]
		\end{array}\right..
	\end{split}
\end{equation}
Similarly, the matrix $M_2(x)$ is non-negative definite is equivalent to
\begin{equation}\label{Fourier2}
	\begin{split}
		\left\{\begin{array}{l}
			(\mathscr{F}F)(x)\leq1\\
			(\mathscr{F}H)(x)\geq0\\
			|(\mathscr{F}G)(x)|^2\leq (\mathscr{F}H)(x)[1-(\mathscr{F}F)(x)]
		\end{array}\right..
	\end{split}
\end{equation}
Combining \eqref{Fourier1}, \eqref{Fourier2} with Claim \ref{claim-4}, Prpposition \ref{prop-2} follows.

\begin{proof}[Proof of Claim~\ref{claim-4}]
If for almost every $x\in\mathbb{R}^d$, the matrices $M_1(x)$ and $M_2(x)$ are non-negative definite, then for any $f_1,f_2\in L^2(\mathbb{R}^d)$, we have
\begin{equation*}
	\begin{split}
		\big(\overline{f_1}(x)\,\,\,\,\overline{f_2}(x)\big)M_1(x)\big({f_1}(x)\,\,\,\,{f_2}(x)\big)^{\mathrm{T}}\geq0\,\,\,\,\text{a.e.}\,\,x\in\mathbb{R}^d,
	\end{split}
\end{equation*}
and
\begin{equation*}
	\begin{split}
		\big(\overline{f_1}(x)\,\,\,\,\overline{f_2}(x)\big)M_2(x)\big({f_1}(x)\,\,\,\,{f_2}(x)\big)^{\mathrm{T}}\geq0\,\,\,\,\text{a.e.}\,\,x\in\mathbb{R}^d,
	\end{split}
\end{equation*}
these imply \eqref{integral1} and \eqref{integral2}.

Conversely, we shall prove that for almost every $x\in\mathbb{R}^d$, the matrix $M_1(x)$ is non-negative definite and we omit the arguments of $M_2(x)$ which is similar to $M_1(x)$. Fix any $N\in\mathbb{N_+}$, it is enough to prove that the matrix $M_1(x)$ is non-negative definite for almost every $x\in[-N,N]^d$. Let $\mathcal{Q}=\{q_n\}_{n=1}^{\infty}$ be a fixed countable dense subset of $\mathbb{C}$. For $m,n\in\mathbb{N_+}$, set $f_1=q_m\chi_{[-N,N]^d}$, $f_2=q_n\chi_{[-N,N]^d}$, then we have
\begin{equation*}
	\begin{split}
		\int_{[-N,N]^d}(\overline{q_m}\,\,\,\,\overline{q_n})M_1(x)(q_m\,\,\,q_n)^{\mathrm{T}}dx\geq0.
	\end{split}
\end{equation*}
Since almost every $x\in[-N,N]^d$ is Lebesgue point of the integrand $(\overline{q_m}\,\,\,\,\overline{q_n})M_1(x)(q_m\,\,\,q_n)^{\mathrm{T}}$, and the integral over any ball supported in ${[-N,N]^d}$ is always non-negative by \eqref{integral1}, there is a zero measure set $E_{mn}$ such that for any $x\in[-N,N]^d\big\backslash E_{mn}$, 
\begin{equation*}
	\begin{split}
		(\overline{q_m}\,\,\,\,\overline{q_n})M_1(x)(q_m\,\,\,q_n)^{\mathrm{T}}\geq0.
	\end{split}
\end{equation*}
Hence for any $x\in[-N,N]^d\big\backslash\big(\bigcup_{m,n\geq1}E_{mn}\big)$ and any $p,q\in\mathcal{Q}$, 
\begin{equation*}
	\begin{split}
		(\overline{p}\,\,\,\overline{q})M_1(x)(p\,\,\,q)^{\mathrm{T}}\geq0.
	\end{split}
\end{equation*}
This yields that for any $x\in[-N,N]^d\big\backslash\big(\bigcup_{m,n\geq1}E_{mn}\big)$ and any $a,b\in\mathbb{C}$,
\begin{equation*}
	\begin{split}
		(\overline{a}\,\,\,\overline{b})M_1(x)(a\,\,\,b)^{\mathrm{T}}\geq0.
	\end{split}
\end{equation*}
Note that $\bigcup _{m,n\geq1}E_{mn}$ has zero measure, thus for almost every $x\in[-N,N]^d$, the matrix $M_1(x)$ is non-negative definite. This shows that for almost every $x\in\mathbb{R}^d$, the matrix $M_1(x)$ is non-negative definite.

This completes the proof of Proposition~\ref{prop-2}.
\end{proof}

\subsection{Proof of Theorem~\ref{thm-3}}
For a real-valued function $f\in L^1(\mathbb{R}^d)\cap L^2(\mathbb{R}^d)$ and $L\geq0$, let us first consider the linear statistics
\begin{equation*}
	\begin{split}
		S_{(|f|,|f|)_L}(\xi)=\sum\limits_{x\in\xi\cap \mathbb{R}_1^d}\big|f\big (\frac{x}{L}\big )\big|+\sum\limits_{y\in\xi\cap \mathbb{R}_2^d}\big|f\big (\frac{y}{L}\big )\big|\,,\,\,\,\,\xi\in\Conf(\mathbb{R}_1^d\sqcup\mathbb{R}_2^d)=\Conf(\mathbb{R}^d\sqcup\mathbb{R}^d),
	\end{split}
\end{equation*}
according to \eqref{expecsf} and \eqref{kernelAL}, by a simple function approximation, we obtain
\begin{equation}\label{calculation1}
	\begin{split}
		\mathbb{E}_LS_{(|f|,|f|)_L}&=\int_{\mathbb{R}^d}|f(x/L)|F_L(x-x)dx+\int_{\mathbb{R}^d}|f(x/L)|H_L(x-x)dx\\
		&=[F_L(0)+H_L(0)]L^d\int_{\mathbb{R}^d}|f(x)|dx\\
		&\leq CL^d\int_{\mathbb{R}^d}|f(x)|dx\\
		&=O(L^d).
	\end{split}
\end{equation}
Hence for fixed $L\geq0$, the linear statistics
\begin{equation*}
	\begin{split}
		S_{(f,-f)_L}(\xi)=\sum\limits_{x\in\xi\cap \mathbb{R}_1^d}f\big (\frac{x}{L}\big )-\sum\limits_{y\in\xi\cap \mathbb{R}_2^d}f\big (\frac{y}{L}\big )\,,\,\,\,\,\xi\in\Conf(\mathbb{R}_1^d\sqcup\mathbb{R}_2^d)=\Conf(\mathbb{R}^d\sqcup\mathbb{R}^d),
	\end{split}
\end{equation*}
as a random variable makes sense, and
\begin{equation}\label{calculation2}
	\begin{split}
		\mathbb{E}_LS_{(f,-f)_L}&=\int_{\mathbb{R}^d}f(x/L)F_L(x-x)dx-\int_{\mathbb{R}^d}f(x/L)H_L(x-x)dx\\
		&=[F_L(0)-H_L(0)]L^d\int_{\mathbb{R}^d}f(x)dx.
	\end{split}
\end{equation}
Through a simple function approximation, \eqref{J-Hermitian-kernel}, \eqref{varsf} and \eqref{kernelAL} yield that
\begin{equation*}
	\begin{split}
		\Var_LS_{(f,-f)_L}&=\int_{\mathbb{R}^d}f^2(x/L)F_L(x-x)dx+\int_{\mathbb{R}^d}f^2(x/L)H_L(x-x)dx\\
		&\quad-\int_{\mathbb{R}^d\times\mathbb{R}^d}f(x/L)f(y/L)F_L(x-y)F_L(y-x)dxdy\\
		&\quad-\int_{\mathbb{R}^d\times\mathbb{R}^d}f(x/L)f(y/L)H_L(x-y)H_L(y-x)dxdy\\
		&\quad+2\int_{\mathbb{R}^d\times\mathbb{R}^d}f(x/L)f(y/L)G_L(x-y)G_L(y-x)dxdy\\
		&=[F_L(0)+H_L(0)]L^d\int_{\mathbb{R}^d}f^2(x)dx\\
		&\quad-\int_{\mathbb{R}^d\times\mathbb{R}^d}f(x/L)f(y/L)|F_L(x-y)|^2dxdy\\
		&\quad-\int_{\mathbb{R}^d\times\mathbb{R}^d}f(x/L)f(y/L)|H_L(x-y)|^2dxdy\\
		&\quad-2\int_{\mathbb{R}^d\times\mathbb{R}^d}f(x/L)f(y/L)|G_L(x-y)|^2dxdy.
	\end{split}
\end{equation*}
Using the Fourier transform and Plancherel theorem, we get
\begin{equation*}
	\begin{split}
		\Var_LS_{(f,-f)_L}&=[F_L(0)+H_L(0)]L^d\int_{\mathbb{R}^d}f^2(x)dx\\
		&\quad-L^d\int_{\mathbb{R}^d}|(\mathscr{F}f)(x)|^2(\mathscr{F}|F_L|^2)(x/L)dx\\
		&\quad-L^d\int_{\mathbb{R}^d}|(\mathscr{F}f)(x)|^2(\mathscr{F}|H_L|^2)(x/L)dx\\
		&\quad-2L^d\int_{\mathbb{R}^d}|(\mathscr{F}f)(x)|^2(\mathscr{F}|G_L|^2)(x/L)dx\\
		&=\sigma_L^2L^d\int_{\mathbb{R}^d}f^2(x)dx\\
		&\quad+L^d\int_{\mathbb{R}^d}|(\mathscr{F}f)(x)|^2\Big\{\big[(\mathscr{F}|F_L|^2)(0)+(\mathscr{F}|H_L|^2)(0)+2(\mathscr{F}|G_L|^2)(0)\big]\\
		&\quad\qquad\quad\quad-\big[(\mathscr{F}|F_L|^2)(x/L)+(\mathscr{F}|H_L|^2)(x/L)+2(\mathscr{F}|G_L|^2)(x/L)\big]\Big\}dx\\
		&=(\sigma^2+o(1))L^d\int_{\mathbb{R}^d}f^2(x)dx+L^dr(L)\\
		&=\sigma^2L^d\int_{\mathbb{R}^d}f^2(x)dx+o(L^d)+L^dr(L).
	\end{split}
\end{equation*}

\begin{claim}\label{claim-5}
	Under the assumptions of Theorem~\ref{thm-3}, $r(L)=o(1)$ as $L\to+\infty$.
\end{claim}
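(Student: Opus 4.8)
The plan is to recognize $r(L)$ as an integral of the fixed weight $|(\mathscr{F}f)(x)|^2$ against a difference of Fourier transforms, and to show it vanishes by dominated convergence, extracting the pointwise decay of the integrand from the two structural hypotheses (tail smallness at the scale $L/\kappa_L$ and the bound $F_L(0)+H_L(0)\le C$).

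First I would introduce the non-negative $L^1$-density
\[
\psi_L(t):=|F_L(t)|^2+|H_L(t)|^2+2|G_L(t)|^2\ge 0,
\]
and set $\Phi_L:=\mathscr{F}\psi_L$, so that by linearity of the Fourier transform the quantity appearing in the variance formula is exactly
\[
r(L)=\int_{\mathbb{R}^d}|(\mathscr{F}f)(x)|^2\,[\Phi_L(0)-\Phi_L(x/L)]\,dx,\qquad \Phi_L(0)-\Phi_L(x/L)=\int_{\mathbb{R}^d}\psi_L(t)\bigl(1-e^{-2\pi i t\cdot x/L}\bigr)\,dt.
\]
The first key observation is that $\int_{\mathbb{R}^d}\psi_L=\Phi_L(0)=F_L(0)+H_L(0)-\sigma_L^2\le C$ uniformly in $L$, directly from the definition of $\sigma_L^2$ together with $\sigma_L^2\ge 0$ and $F_L(0)+H_L(0)\le C$. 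This yields the uniform domination $|\Phi_L(0)-\Phi_L(x/L)|\le 2\int\psi_L\le 2C$, so the integrand in $r(L)$ is dominated by the fixed integrable function $2C\,|(\mathscr{F}f)(x)|^2$, integrability holding since $\|\mathscr{F}f\|_2=\|f\|_2<\infty$ by Plancherel (only $f\in L^2$ is needed here).

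Next I would establish pointwise decay of $\Phi_L(0)-\Phi_L(x/L)$ for each fixed $x$. Splitting the $t$-integral at $|t|=L/\kappa_L$, on the inner region I use $|1-e^{-2\pi i t\cdot x/L}|\le 2\pi|t||x|/L\le 2\pi|x|/\kappa_L$ together with $\int\psi_L\le C$, and on the outer region the crude bound $|1-e^{-2\pi i t\cdot x/L}|\le 2$ together with the hypothesis $\int_{|t|>L/\kappa_L}\psi_L\to 0$; this gives, for every fixed $x$,
\[
|\Phi_L(0)-\Phi_L(x/L)|\le \frac{2\pi C|x|}{\kappa_L}+2\int_{|t|>L/\kappa_L}\psi_L(t)\,dt,
\]
which tends to $0$ as $L\to+\infty$ since $\kappa_L\to+\infty$. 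Combining this pointwise convergence with the uniform integrable domination above, the dominated convergence theorem gives $r(L)\to 0$, which is the claim.

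The main obstacle is conceptual rather than computational: because the density $\psi_L$ itself varies with $L$, one cannot simply invoke continuity of a single fixed Fourier transform at the origin to conclude $\Phi_L(x/L)\to\Phi_L(0)$. The two hypotheses of Theorem~\ref{thm-3} are tailored precisely to overcome this, and the delicate point is that they must be balanced at exactly the scale $L/\kappa_L$: the condition $\kappa_L\to+\infty$ makes the near-origin (Lipschitz-type) contribution small uniformly on each bounded set of $x$, while the tail condition controls the mass of $\psi_L$ living beyond $|t|>L/\kappa_L$, and neither alone suffices.
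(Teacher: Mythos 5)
Your proof is correct and follows essentially the same route as the paper: the same uniform bound $\int W_L \le F_L(0)+H_L(0)-\sigma_L^2\le C$ and the same splitting of the $t$-integral at $|t|=L/\kappa_L$ to control $|(\mathscr{F}W_L)(0)-(\mathscr{F}W_L)(x/L)|$. The only (inessential) difference is that you package the conclusion via dominated convergence, whereas the paper makes the argument quantitative by also splitting the $x$-integral at $|x|=\sqrt{\kappa_L}$; both are valid.
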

It follows from Claim \ref{claim-5} that 
\begin{equation}\label{calculation3}
	\begin{split}
		\Var_LS_{(f,-f)_L}=\sigma^2L^d\int_{\mathbb{R}^d}f^2(x)dx+o(L^d)\,\,\,\text{as}\,\,\,L\to +\infty.
	\end{split}
\end{equation}
Hence if $f\in B_c(\mathbb{R}^d)$, together with \eqref{calculation1}, \eqref{calculation2}, \eqref{calculation3} and Theorem \ref{thm-1}, we have
\begin{equation*}
	\begin{split}
		\frac{\big[\sum_{x\in\xi\cap\mathbb{R}_1^d}f(\frac{x}{L})-\sum_{y\in\xi\cap\mathbb{R}_2^d}f(\frac{y}{L})\big]-\big[F_L(0)-H_L(0)\big]L^d\int_{\mathbb{R}^d}f(x)dx}{\sqrt{\sigma^2L^d\int_{\mathbb{R}^d}f^2(x)dx+o(L^d)}}
	\end{split}
\end{equation*}
converges in distribution to $N(0,1)$ as $L\to +\infty$. And then we can deduce that 
\begin{equation*}
	\begin{split}
		\frac{1}{\sigma L^{d/2}}\Big\{\big[\sum\limits_{x\in\xi\cap \mathbb{R}_1^d}f\big (\frac{x}{L}\big )-\sum\limits_{y\in\xi\cap \mathbb{R}_2^d}f\big (\frac{y}{L}\big )\big]-\big[F_L(0)-H_L(0)\big]L^d\int_{\mathbb{R}^d}f(x)dx\Big\}
	\end{split}
\end{equation*}
converges in distribution to $N(0,\int_{\mathbb{R}^d}f^2(x)dx)$ as $L\to +\infty$.

As for general real-valued function $f\in L^1(\mathbb{R}^d)\cap L^2(\mathbb{R}^d)$, we choose $f_n\in B_c(\mathbb{R}^d)$ such that $f_n$ approach to $f$ as $n\to\infty$ in $L^2$-sense. Observe that
\begin{equation*}
	\begin{split}
		&\quad\mathbb{E}_L\Big[\Big(\frac{S_{(f,-f)_L}-\mathbb{E}_LS_{(f,-f)_L}}{\sigma L^{d/2}}-\frac{S_{(f_n,-f_n)_L}-\mathbb{E}_LS_{(f_n,-f_n)_L}}{\sigma L^{d/2}}\Big)^2\,\Big]\\
		&=\frac{1}{\sigma^2L^d}\mathbb{E}_L\Big[\big(S_{(f-f_n,\,-(f-f_n))_L}-\mathbb{E}_LS_{(f-f_n,\,-(f-f_n))_L}\big)^2\,\Big]\\
		&=\frac{1}{\sigma^2L^d}\Var_LS_{(f-f_n,\,-(f-f_n))_L}\\
		&=\frac{1}{\sigma^2L^d}\Big(\sigma^2L^d\int_{\mathbb{R}^d}[f(x)-f_n(x)]^2dx+o(L^d)\Big)\\
		&=\int_{\mathbb{R}^d}[f(x)-f_n(x)]^2dx+o(1)
	\end{split}
\end{equation*}
can be arbitrarily small when $n$ and $L$ are sufficiently large. Since 
\begin{equation*}
	\begin{split}
		\frac{1}{\sigma L^{d/2}}\Big\{\big[\sum\limits_{x\in\xi\cap \mathbb{R}_1^d}f_n\big (\frac{x}{L}\big )-\sum\limits_{y\in\xi\cap \mathbb{R}_2^d}f_n\big (\frac{y}{L}\big )\big]-\big[F_L(0)-H_L(0)\big]L^d\int_{\mathbb{R}^d}f_n(x)dx\Big\}
	\end{split}
\end{equation*}
converges in distribution to $N(0,\int_{\mathbb{R}^d}f_n^2(x)dx)$ as $L\to +\infty$, and
\begin{equation*}
	\begin{split}
		\lim\limits_{n\to\infty}\int_{\mathbb{R}^d}f_n^2(x)dx=\int_{\mathbb{R}^d}f^2(x)dx,
	\end{split}
\end{equation*}
a simple analysis implies that
\begin{equation*}
	\begin{split}
		\frac{1}{\sigma L^{d/2}}\Big\{\big[\sum\limits_{x\in\xi\cap \mathbb{R}_1^d}f\big (\frac{x}{L}\big )-\sum\limits_{y\in\xi\cap \mathbb{R}_2^d}f\big (\frac{y}{L}\big )\big]-\big[F_L(0)-H_L(0)\big]L^d\int_{\mathbb{R}^d}f(x)dx\Big\}
	\end{split}
\end{equation*}
converges in distribution to $N(0,\int_{\mathbb{R}^d}f^2(x)dx)$ as $L\to +\infty$.

\begin{proof}[Proof of Claim \ref{claim-5}]
Denote $W_L=|F_L|^2+|H_L|^2+2|G_L|^2$, a simple calculation about Fourier transform gives that
\begin{equation*}
	\begin{split}
		(\mathscr{F}W_L)(x)=\int_{\mathbb{R}^d}\big[&(\mathscr{F}F_L)(t)(\mathscr{F}F_L)(x-t)+(\mathscr{F}H_L)(t)(\mathscr{F}H_L)(x-t)\\
		&+2(\mathscr{F}G_L)(t)\overline{(\mathscr{F}G_L)}(x-t)\big]dt.
	\end{split}
\end{equation*}
Hence by Cauchy-Schwarz inequality and Plancherel theorem, we obtain
\begin{equation}\label{W(L)-estimateion}
	\begin{split}
		|(\mathscr{F}W_L)(x)|&\leq\int_{\mathbb{R}^d}\big[|(\mathscr{F}F_L)(t)|^2+|(\mathscr{F}H_L)(t)|^2+2|(\mathscr{F}G_L)(t)|^2\big]dt\\
		&=\int_{\mathbb{R}^d}\big[|F_L(x)|^2+|H_L(x)|^2+2|G_L(x)|^2\big]dt\\
		&=F_L(0)+H_L(0)-\sigma_L^2\\
		&\leq F_L(0)+H_L(0)\\
		&\leq C.
	\end{split}
\end{equation}

Split $r(L)$ into two parts as follow:
\begin{equation*}
	\begin{split}
		r(L)&=\int_{\mathbb{R}^d}|(\mathscr{F}f)(x)|^2\big[(\mathscr{F}W_L)(0)-(\mathscr{F}W_L)(x/L)\big]dx\\
		&=\int_{|x|>\sqrt{\kappa_L}}|(\mathscr{F}f)(x)|^2\big[(\mathscr{F}W_L)(0)-(\mathscr{F}W_L)(x/L)\big]dx\\
		&\quad+\int_{|x|\leq\sqrt{\kappa_L}}|(\mathscr{F}f)(x)|^2\big[(\mathscr{F}W_L)(0)-(\mathscr{F}W_L)(x/L)\big]dx\\
		&=r_1(L)+r_2(L).
	\end{split}
\end{equation*}
It follows from \eqref{W(L)-estimateion} that
\begin{equation}\label{r1(L)}
	\begin{split}
		|r_1(L)|&\leq\int_{|x|>\sqrt{\kappa_L}}|(\mathscr{F}f)(x)|^2\big|(\mathscr{F}W_L)(0)-(\mathscr{F}W_L)(x/L)\big|dx\\
		&\leq2C\int_{|x|>\sqrt{\kappa_L}}|(\mathscr{F}f)(x)|^2dx\\
		&=o(1).
	\end{split}
\end{equation}
To deal with $r_2(L)$, note that $W_L$ is non-negative and estimate by \eqref{W(L)-estimateion}, we have
\begin{equation*}
	\begin{split}
		|(\mathscr{F}W_L)(0)-(\mathscr{F}W_L)(x/L)|&=\Big|\int_{\mathbb{R}^d}W_L(t)(1-e^{-2\pi it\cdot \frac{x}{L}})dt\Big|\\
		&\leq\int_{|t|>\frac{L}{\kappa_L}}W_L(t)\big|1-e^{-2\pi it\cdot \frac{x}{L}}\big|dt\\
		&\quad+\int_{|t|\leq\frac{L}{\kappa_L}}W_L(t)\big|1-e^{-2\pi it\cdot \frac{x}{L}}\big|dt\\
		&\leq2\int_{|t|>\frac{L}{\kappa_L}}W_L(t)dt+\frac{2\pi|x|}{\kappa_L}\int_{|t|\leq\frac{L}{\kappa_L}}W_L(t)dt\\
		&\leq2\int_{|t|>\frac{L}{\kappa_L}}W_L(t)dt+\frac{2\pi|x|}{\kappa_L}\int_{\mathbb{R}^d}W_L(t)dt\\
		&=2\int_{|t|>\frac{L}{\kappa_L}}W_L(t)dt+\frac{2\pi|x|}{\kappa_L}(\mathscr{F}W_L)(0)\\
		&\leq2\int_{|t|>\frac{L}{\kappa_L}}W_L(t)dt+\frac{2\pi C|x|}{\kappa_L}\,.
 	\end{split}
\end{equation*}
Therefore,
\begin{equation}\label{r2(L)}
	\begin{split}
		|r_2(L)|&\leq\int_{|x|\leq\sqrt{\kappa_L}}|(\mathscr{F}f)(x)|^2\big|(\mathscr{F}W_L)(0)-(\mathscr{F}W_L)(x/L)\big|dx\\
		&\leq\Big(2\int_{|t|>\frac{L}{\kappa_L}}W_L(t)dt+\frac{2\pi C}{\sqrt{\kappa_L}}\Big)\int_{|x|\leq\sqrt{\kappa_L}}|(\mathscr{F}f)(x)|^2dx\\
	    &\leq2\Big(\int_{|t|>\frac{L}{\kappa_L}}W_L(t)dt+\frac{\pi C}{\sqrt{\kappa_L}}\Big)\int_{\mathbb{R}^d}|(\mathscr{F}f)(x)|^2dx\\
	    &=o(1).
	\end{split}
\end{equation}
The estimations \eqref{r1(L)} and \eqref{r2(L)} imply $r(L)=o(1)$.

This completes the proof of Theorem~\ref{thm-3}.
\end{proof}

\end{document}